\newtheorem{theorem}{Theorem}%[section]
\newtheorem{lemma}[theorem]{Lemma}
\newtheorem{proposition}[theorem]{Proposition}
\newtheorem{corollary}[theorem]{Corollary}
\theoremstyle{definition}
\newtheorem{example}[theorem]{Example}
\theoremstyle{remark}
\newtheorem{remark}[theorem]{Remark}
\numberwithin{equation}{section}
\newcommand{\R}{\mathbb{R}}
\newcommand{\C}{\mathbb{C}}
\newcommand{\ov}{\overline}
\newcommand{\begeq}{\begin{equation}}
\newcommand{\stopeq}{\end{equation}}
\begin{document}

\title[Hypocomplex vector fields]
{Properties of solutions of a class of hypocomplex vector fields}

\author{C. Campana}
\address{\small Departamento de Matem\'atica, Instituto de Ci\^encias Matem\'aticas e de\break Computa\c c\~ao,
Universidade de S\~ao Paulo, Caixa Postal 668, S\~ao Carlos, SP 13560-970, Brazil}
\email{camilo@icmc.usp.br}
\thanks{The first author was supported by FAPESP}

\author{P. L. Dattori da Silva}
\address{\small Departamento de Matem\'atica, Instituto de Ci\^encias Matem\'aticas e de\break Computa\c c\~ao,
Universidade de S\~ao Paulo, Caixa Postal 668, S\~ao Carlos, SP 13560-970, Brazil}
\email{dattori@icmc.usp.br}
\thanks{The second author was supported in part by CNPq and FAPESP}

\author{A. Meziani}
\address{\small Department of Mathematics, Florida International
University, Miami, FL, 33199, USA}
\email{meziani@fiu.edu}

%\subjclass[2000]{Primary 35A01; Secondary 30G20, 35J70}

%\keywords{Global solvability, degenerate elliptic, periodic solutions, vector fields}

\begin{abstract}A Cauchy type integral operator is associated to a class of
integrable vector fields with complex coefficients. Properties of the integral
operator are used to deduce H\"{o}lder solvability of semilinear equations
$Lu=F(x,y,u)$ and a strong similarity principle between the solutions
of the equation $Lu=au+b\overline{u}$ and those of the equation $Lu=0$.
\end{abstract}
\maketitle

\section{Introduction}

This paper explores the extent to which properties of the Cauchy-Riemann operator $\partial/\partial\overline{z}$ extend to planar complex vector fields.
The class of vector fields amenable to possess such properties is within the so called ``hypocomplex" vector fields. In the plane these vector fields have first
integrals that are local homeomorphisms. Any such vector field $L$ with $C^\infty$ coefficients is solvable in the $C^\infty$ category: if $f\in C^\infty(\mathcal{U})$
then there exists $u\in C^\infty(\mathcal{U})$ satisfying the equation
\begin{equation}\label{eq_1_intro}
Lu=f.
\end{equation}
In fact, $L$ is also hypoelliptic. It is also proved (see \cite{BeCH} or \cite{Hounie_Perdigao}) that if $f\in L^p$
then  (\ref{eq_1_intro}) {has solutions} in $L^p$.
However, in general the solution $u$ cannot be expected to be more regular than the right hand side. Indeed, in \cite{BeCH} the authors give an example of a
$C^\infty$ hypocomplex vector field $L$ and a function $f\in L^\infty(\mathbb{R}^2)$ such that (\ref{eq_1_intro}) does not have {$L^\infty$ solutions} in
any neighborhood of $0\in\R^2$.

In this paper we isolate a class of hypocomplex vector fields, with properties analogous to those of $\partial/\partial\overline{z}$.
This class consist of those vector
fields $L$, defined in a bounded domain $\Omega$, that are locally equivalent to a multiple of the vector field
\begin{equation}\label{eq_2_intro}
\partial/\partial y -i|y|^{\sigma_\mathsf{p}}\partial/\partial x, \quad \sigma_\mathsf{p}>0,
\end{equation}
in a neighborhood of each point $\mathsf{p}$ where $L$ fails to be elliptic. In this case, we prove that if $f\in L^p(\Omega)$, with $p>2+\sigma$ and
$\sigma=\max\{\sigma_\mathsf{p}\}$, then all solutions $u$ of (\ref{eq_1_intro}) are in $C^\alpha(\Omega)$, where $\alpha=(2-q-\tau)/q$, with
$\tau=\sigma/(\sigma+1)$ and $q=p/(p-1)$. This result is obtained through the study of the integral operator
\begin{equation}\label{eq_3_intro}
T_Zf(x,y)=\frac{1}{2\pi i}\int_\Omega\frac{f(\xi,\eta)}{Z(\xi,\eta)-Z(x,y)}d\xi d\eta,
\end{equation}
where $Z:\Omega\rightarrow\mathbb{C}$ is a global first integral of $L$. This result is then applied to show that the {semilinear} equation
\begin{equation}\label{eq_4_intro}
Lu=F(x,y,u)
\end{equation}
has H\"older continuous solution for a class of functions $F$. As a consequence we obtain a strong similarity principle for the solutions of the equation
\begin{equation}\label{eq_5_intro}
Lu=Au+B\overline{u},
\end{equation}
with $A,B\in L^p(\Omega)$. That is, any solution of (\ref{eq_5_intro}) is in $C^\alpha(\Omega)$ and satisfies $u=H(Z)e^s$, where $H$ is a holomorphic function
defined in $Z(\Omega)$ and $s\in C^\alpha$.

It should be noted that analogous questions were investigated in \cite{Mez-CM05} for vector field (\ref{eq_2_intro}) with $\sigma\in 2\mathbb{Z}_+$ and where $f\in L^\infty$
(resp, $A,B\in L^\infty$).
The approach and motivation for this paper are related to \cite{BeCH}, \cite{Ber-Hou-San}, \cite{Begehr}, \cite{Bers},
 \cite{Mez-JDE99}, \cite{Mez-Mem}, \cite{Mez-CV08}, \cite{Mez-CM05}, \cite{Vekua}, and many others.
This paper is organized as follows. After the necessary preliminaries of section 2 and technical Lemmas of section 3, we study properties of the integral operator
$T_Z$ in section 4 and 5. In particular, we prove that $T_Zf$ solves (\ref{eq_1_intro}) for any $f\in L^1(\Omega)$,  $T_Zf\in L^q(\Omega)$ for any
$1\leq q<2-\tau$, and $T_Zf\in C^{(2-q-\tau)/q}(\Omega)$ if $f\in L^p(\Omega)$ with $p>2+\sigma$. In section 6 we study the {semilinear} equation and deduce the
similarity principle.

This work was done when the first and second authors were visiting the Department of Mathematics \& Statistics at Florida International University.
They are grateful and would like to thank the members of the host institution for the support they provided during the visit.

%%%%%%%%%%%%%%%%%%% preliminaries %%%%%%%%%%%%%%%%

\section{Preliminaries}\label{preliminaries}

Let
\[
L=A(x,y)\partial/\partial x+B(x,y)\partial/\partial y
\]
be a complex vector field defined in a region $\tilde{\Omega}\subset\mathbb{R}^2$, where $A$ and $B$ are $\mathbb{C}$-valued H\"older continuous functions in $\tilde\Omega$,
$|A|+|B|>0$ in $\tilde\Omega$. Let $\Omega$ be an open set such that
$\overline{\Omega}\subset\tilde{\Omega}$. Hypocomplex vector fields were introduced (see \cite{BeCH} or \cite{Treves-hypo}) as those vector fields
that are locally integrable and such that any solution of $Lu=0$ can be written locally as $h\circ Z$ with $h$ holomorphic and $Z$ a first integral.
It turns out that in the case of vector fields in the plane, this is equivalent to requiring that any first integral of $L$ is a local homeomorphism.
In this paper, we consider
a vector field $L$ to be \textit{hypocomplex} on $\overline{\Omega}$ if for every $\mathsf{p}\in\overline{\Omega}$ there exists a $C^{1+\epsilon}$  function (for some $\epsilon >0$)
$Z:\mathcal{U}\rightarrow\mathbb{C}$ defined in an open set $\mathcal{U}$, $\mathsf{p}\in\mathcal U$, such that $dZ\neq0$, $LZ=0$, and
$Z:\mathcal{U}\rightarrow Z(\mathcal{U})$ is a homeomorphism.
The set where $L$ fails to be elliptic is given by
\[
\Sigma=\{\mathsf{p}\in\overline{\Omega}; \, L_\mathsf{p}\wedge\overline{L}_\mathsf{p}=0\}=\{\mathsf{p}\in\overline{\Omega}; \, Im(A\overline{B})(\mathsf{p})=0\},
\]
where $\overline{L}=\overline{A}(x,y)\partial/\partial x+\overline{B}(x,y)\partial/\partial y$ is the complex conjugate of $L$. We will refer to $\Sigma$
as the characteristic set of $L$ ($\Sigma$ is in fact the base projection of the characteristic set of the first order differential operator $L$).

The class of vector fields under study in this paper are those vector fields that satisfy the following conditions:

\begin{itemize}
\item[(i)] $L$ is hypocomplex in $\overline{\Omega}$
\item[(ii)] The characteristic set $\Sigma\subset \overline{\Omega}$ is a $C^{1+\epsilon}$ curve nontangent to $L$
\item[(iii)] For every $\mathsf{p}\in\Sigma$, there exists an open set $\mathcal U$, with $\mathsf{p}\in\mathcal U$, such that $\Sigma\cap\mathcal{U}$ is given by a defining
$C^{1+\epsilon}$ function $\rho(x,y)$ such that $\Im(A\overline{B})(x,y)=|\rho(x,y)|^\sigma g(x,y)$, for some continuous function $g$ in $\mathcal U$ satisfying $g(x,y)\neq0$ for all $(x,y)\in\mathcal{U}$.
\end{itemize}

It should be noted that these conditions are invariant under $C^{1+\epsilon}$ change of variables. The following Proposition gives a local normal form for vector fields satisfying
the above conditions.

\begin{proposition}\label{normalization}
Suppose that $L$ satisfies {\em(i)}, {\em(ii)}, and {\em(iii)}. Then, for every $\mathsf{p}\in\Sigma$, there exist
an open neighborhood $U$ such that
$U\backslash\Sigma$ consists of two connected components
$U^+$ and $U^-$, and local coordinates $(x^+,t^+)$ $($respectively $(x^-,t^-)$$)$ centered at $\mathsf{p}$ such that $L$ is a multiple
of the following vector field in $U^+$ $($respectively $U^-$$)$:
\begin{equation}\label{L_normalized}
L_\sigma=\partial/\partial t^\pm -i|t^\pm |^\sigma\partial/\partial x^\pm,
\end{equation}
with first integral
\begin{equation}\label{Z_normalized}
Z_\sigma(x,t)=x^\pm +i\frac{t^\pm |t^\pm |^\sigma}{\sigma+1}
\end{equation}
\end{proposition}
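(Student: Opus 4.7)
My plan is to straighten $\Sigma$ via a $C^{1+\epsilon}$ change of variables, pick a first integral $Z$ of $L$ supplied by hypocomplexity, conformally straighten $Z(\Sigma)$ to the real axis on each elliptic side, and then obtain the normal-form coordinates by taking a $(\sigma+1)$th root of the imaginary part.

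Using (ii), I first apply a $C^{1+\epsilon}$ change of coordinates centered at $\mathsf{p}$ so that $\Sigma\cap U=\{t=0\}$ and $\rho(x,t)=t$. The nontangency of $L$ to $\Sigma$ lets me divide by the coefficient of $\partial/\partial t$ and write $L$ as a nonvanishing multiple of $\partial/\partial t+\mu(x,t)\,\partial/\partial x$; by (iii), $\Im\mu(x,t)=|t|^{\sigma}h(x,t)$ with $h$ continuous and nowhere zero. After conjugating $L$ or reversing the sign of $t$ as needed, $h$ has fixed sign and $U\setminus\Sigma=U^{+}\cup U^{-}$ with $U^{\pm}=\{\pm t>0\}\cap U$. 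By (i), fix a $C^{1+\epsilon}$ first integral $Z:U\to\mathbb{C}$ that is a homeomorphism with $dZ\neq 0$ and $Z(\mathsf{p})=0$. From $Z_{t}=-\mu Z_{x}$ and the reality of $\mu$ on $\Sigma$ one deduces $Z_{x}(x,0)\neq 0$, so $\gamma:=Z(\Sigma)$ is a simple regular $C^{1+\epsilon}$ curve through $0\in\mathbb{C}$ separating $Z(U)$ into $Z(U^{+})$ and $Z(U^{-})$.

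Next I straighten $\gamma$ to the real axis. Since $L$ is elliptic on $U^{+}$, every first integral there is a holomorphic function of $Z$. By the Riemann mapping theorem together with Carath\'eodory boundary extension, sharpened by Kellogg--Warschawski for the $C^{1+\epsilon}$ arc $\gamma$, there is a conformal map $\phi^{+}:Z(U^{+})\to D^{+}$, where $D^{+}$ is a Jordan domain in $\{\Im\zeta>0\}$ bounded in part by a real segment, with $\phi^{+}$ extending homeomorphically to $\gamma\to\mathbb{R}$ and $\phi^{+}(0)=0$. Set $W^{+}=\phi^{+}\circ Z$; then $W^{+}$ is a first integral of $L$ on $U^{+}$, continuous on $U^{+}\cup\Sigma$, with $W^{+}(\Sigma)\subset\mathbb{R}$, $\Im W^{+}>0$ on $U^{+}$, and $W^{+}(\mathsf{p})=0$. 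I define
\[
x^{+}=\Re W^{+},\qquad t^{+}=\bigl((\sigma+1)\,\Im W^{+}\bigr)^{1/(\sigma+1)}
\]
on $U^{+}\cup\Sigma$. These are continuous, vanish at $\mathsf{p}$, form a chart on $U^{+}$, and satisfy
\[
W^{+}=x^{+}+i\,\frac{t^{+}|t^{+}|^{\sigma}}{\sigma+1}=Z_{\sigma}(x^{+},t^{+}).
\]
Writing $L=a\,\partial/\partial x^{+}+b\,\partial/\partial t^{+}$ in these coordinates and using $LW^{+}=0$ gives $a+i|t^{+}|^{\sigma}b=0$, so $L=b\,L_{\sigma}$. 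The analogous argument on $U^{-}$ (with $\phi^{-}$ mapping to $\{\Im\zeta<0\}$ and $t^{-}<0$) completes the proof.

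The main obstacle is the conformal straightening step: since $L$ preserves first integrals only under \emph{holomorphic} changes in the target, flattening the $C^{1+\epsilon}$ curve $\gamma$ must be carried out by a holomorphic map, and this is possible only by invoking boundary regularity of Riemann mappings onto domains with $C^{1+\epsilon}$-regular boundary arcs. It is precisely the $C^{1+\epsilon}$ hypothesis in (ii) that makes this workable; mere continuity of $\Sigma$ (and hence of $\gamma$) would not suffice.
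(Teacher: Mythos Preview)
Your overall strategy---straighten $\Sigma$, pick a first integral, conformally flatten its image to the real axis on each side via Kellogg--Warschawski, then take a $(\sigma+1)$th root of the imaginary part---matches the paper's in spirit, but the order of operations hides a genuine gap. As written, your construction of $t^{+}=\bigl((\sigma+1)\,\Im W^{+}\bigr)^{1/(\sigma+1)}$ never uses condition~(iii) after the opening paragraph. On the open set $U^{+}$ the map $(x,t)\mapsto(x^{+},t^{+})$ is indeed a smooth chart and the computation $L=b\,L_{\sigma}$ goes through, but the same computation works for \emph{any} exponent $\sigma'>0$ in place of $\sigma$: on an elliptic region every nondegenerate planar complex vector field is locally a multiple of $\partial_{\bar z}$ and can be reparametrized freely. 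What singles out the specific $\sigma$ from~(iii) is the requirement that $(x^{+},t^{+})$ extend as a $C^{1}$ chart across $\Sigma$---this is exactly how the proposition is used later (the changes of variables $\Phi_{jk}$ in the proofs of Theorems~\ref{continoperatz} and~\ref{holder_continuity} are $C^{1}$-diffeomorphisms on the closed half-disc). For that you must show that $\Im W^{+}$ vanishes to order exactly $\sigma+1$ in $t$ with a nonvanishing coefficient, so that the root $t^{+}$ is comparable to $t$ near $\Sigma$. Your argument does not establish this.

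The paper avoids the difficulty by reversing the order: it first exploits~(iii) to write the first integral as $Z=x+i\varphi$ with $\varphi_{y}=|y|^{\sigma}\psi$, integrates, and makes a preliminary substitution so that already $Z=x+i\bigl(t|t|^{\sigma}/(\sigma+1)+\beta(x)\bigr)$; only \emph{then} does it apply the conformal straightening $H^{\pm}$ to remove the $\beta(x)$ term. Because $H^{\pm}$ has nonvanishing $C^{1}$ derivative up to the boundary, one checks that $\Im Z^{\pm}$ still factors as $\dfrac{t|t|^{\sigma}}{\sigma+1}\,\tilde\psi(x,t)^{1+\sigma}$ with $\tilde\psi(0,0)>0$, and the final substitution $t^{\pm}=t\,\tilde\psi(x,t)$ is then manifestly regular across $\Sigma$. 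Your argument can be repaired along these lines, but as it stands the regularity of the chart at $\Sigma$---and with it the role of $\sigma$---is missing.
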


\begin{proof}
Since $L$ is hypocomplex and $\Sigma$ is smooth then we can assume that there is in a neighborhood of $\mathsf{p}\in\Sigma$, such that the set $\Sigma\cap\mathcal{U}$ is given by
$\{y=0\}$ and that the first integral has the form $Z(x,y)=x+i\varphi(x,y)$, with $\varphi$ real-valued (see \cite{BeCH}, \cite{Treves-hypo}).
Thus $L$ is a multiple of the Hamiltonian
\[
Z_x\partial/\partial y -Z_y\partial/\partial x
\]
and condition (iii) implies that
\[
\frac{\partial\varphi}{\partial y}(x,y)=|y|^\sigma\psi(x,y),
\]
with $\psi$ a $C^0$ function, and $\psi(0,0)\neq0$. %We can assume without loss of generality that $\psi(0,0)>0$.
Then,
\[
\varphi(x,y)=\int_0^y|s|^\sigma\psi(x,s)\,ds\, +\beta (x)=\tilde{\varphi}(x,y)+\beta(x),
\]
for some $C^{1+\epsilon}$-function $\beta$.
With respect to the new variables
\[
x=x, \quad t=\textrm{sgn}(\tilde{\varphi})|\tilde{\varphi}(x,y)(1+\sigma)|^\frac{1}{1+\sigma}
\]
the expression of $Z$ becomes
\[
Z(x,t)=x+i\left(\frac{t|t|^\sigma}{\sigma+1}+\beta(x)\right)\, .
\]

Let $D$ be a small disc centered at 0 such that $\Sigma $ divides
$D$ into two semi discs $D^+=\{ t>0\}$ and $D^-=\{t<0\}$.
Note that $Z(D^+)$ and $Z(D^-)$ are simply connected in $\C$ sharing a boundary
curve $\gamma =\{ (x,\beta (x))\}$. Since $\gamma$ is $C^{1+\epsilon}$-curve, then we can find conformal mappings
\[
H^\pm :\, Z(D^\pm)\, \longrightarrow\, H^\pm (Z(D^\pm))\subset\C
\]
sending the boundary curve $\gamma$ into the real axis and $H^\pm$ extends as a $C^1$-diffeomor\-phism to a full neighborhood of 0.
The function $Z^\pm (x,t) =H^\pm (Z(x,t))$ is then a first integral of $L$ in $D^\pm$ satisfying $\Im Z^\pm(x,0)=0$.
It follows that $\Im Z^\pm (x,t)=\displaystyle\frac{t|t|^\sigma}{\sigma+1} \tilde{\psi}(x,t)^{1+\sigma}$ for some function $\tilde{\psi}(x,t)$ (with
$\tilde{\psi}(0,0)>0$). With respect to the new coordinates
\[
x^\pm =\Re Z^\pm (x,t),\quad t^\pm =t\tilde{\psi}(x,t)
\]
the vector field $L$ becomes a multiple of the desired vector field $L_\sigma$ given in the Proposition.

\end{proof}

\begin{remark}
Note that if $L$ is $C^\infty$, locally integrable and, of constant type along each connected component of $\Sigma$, then it satisfies (iii) (see \cite{BeCH}, \cite{Treves-hypo}).
Indeed, in this case if $\Sigma_j$ is connected component of $\Sigma$ and $L$ is of constant type $n_j$ along $\Sigma_j$, then it can be shown (as in \cite{Mez-CM05}) that for each point $p\in\Sigma_j$
coordinates $(x^\pm,t^\pm)$ can found in which the expression of $L$ is as in the Proposition \ref{normalization}.
\end{remark}

\begin{remark}
The vector field (of infinite type)
\[
L=\frac{\partial}{\partial t} -i\frac{e^{-\frac{1}{|t|}}}{t^2}\frac{\partial}{\partial x},
\]
is of class $C^\infty$, with characteristic set $\Sigma=\{t=0\}$. Also, $L$ is hypocomplex with (global)
first integral
\[
Z(x,t)=x+i\frac{t}{|t|}e^{-\frac{1}{|t|}}.
\]
However, the condition (iii) is not satisfied, since $e^{-\frac{1}{|t|}}\neq O(|t|^\sigma)$, for any $\sigma>0$.
\end{remark}

We close this section with the following Proposition.

\begin{proposition}\label{first_integral}
Let $L$ be a hypocomplex vector field with local first integrals  defined in an open set $\tilde{\Omega}\subset\mathbb{R}^2$.  Then for any open set
$\Omega\subset\subset\tilde{\Omega}$,
$L$ has a global first integral on $\overline{\Omega}$. More precisely, there exists $C^{1+\epsilon}$ function
\[
Z:\, \ov{\Omega}\, \longrightarrow\, Z(\ov{\Omega})\subset\subset\mathbb{C}
\]
such that $Z$ is a homeomorphism, $LZ=0$ and $dZ\ne 0$.
\end{proposition}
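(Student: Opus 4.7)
The plan is to patch the local first integrals guaranteed by hypocomplexity into a single global homeomorphism, using that any two first integrals defined near a common point differ by a biholomorphism of their images in $\C$.

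I would first extract, from the open cover of $\ov\Omega$ by domains of local first integrals, a finite subcover $\{U_1,\dots,U_N\}$ with associated $C^{1+\epsilon}$ first integrals $Z_j\colon U_j\to\C$, each with $dZ_j\ne 0$ and a homeomorphism onto its image. Hypocomplexity supplies, on each nonempty overlap $U_j\cap U_k$, a biholomorphism $h_{jk}\colon Z_k(U_j\cap U_k)\to Z_j(U_j\cap U_k)$ with $Z_j=h_{jk}\circ Z_k$, and the family $(h_{jk})$ satisfies the cocycle identity on triple overlaps. I would then build $Z$ by analytic continuation: fix a basepoint $p_0$ and set $Z:=Z_1$ on $U_1\ni p_0$; for any $p\in\ov\Omega$, choose a chain of successively overlapping charts from $U_1$ to a chart containing $p$, and inductively modify each chart by composition with the appropriate transition biholomorphism so that the charts agree on overlaps, defining $Z(p)$ by the last (modified) chart.

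Single-valuedness of $Z$ --- independence of the chain --- requires that continuation around any closed loop in $\ov\Omega$ returns the original branch. Here I would exploit the planarity of $\ov\Omega$: each $Z_j$ is an orientation-consistent local homeomorphism of a planar region into $\C$, and near points of $\Sigma$ one has the explicit normal form from Proposition~\ref{normalization}; these ingredients trivialize the cocycle $(h_{jk})$ and rule out nontrivial monodromy. The resulting $Z$ is then $C^{1+\epsilon}$ on $\ov\Omega$, with $LZ=0$ and $dZ\ne 0$ inherited from the local charts, and compactness of $\ov\Omega$ forces $Z(\ov\Omega)\subset\subset\C$. Global injectivity $Z(p)\ne Z(q)$ for $p\ne q$, combined with continuity on the compact set $\ov\Omega$, then upgrades $Z$ to a homeomorphism onto its image.

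The hard part will be establishing single-valuedness together with global injectivity, since these are genuinely global properties not deducible from the local first-integral data alone. They require using that $\ov\Omega$ is a compact planar region in an essential way, effectively a uniformization-type argument ensuring that the cocycle $(h_{jk})$ admits a global trivialization and that the resulting globally defined $Z$ does not self-overlap.
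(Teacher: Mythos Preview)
Your outline ultimately lands on the right key ingredient --- a uniformization-type argument for planar Riemann surfaces --- but reaches it by a roundabout path that the paper avoids. The paper's proof is much shorter: it simply observes that the collection $\{U_\mathsf{p}, Z_\mathsf{p}\}_{\mathsf{p}\in\ov\Omega}$ makes the open set $\bigcup_\mathsf{p} U_\mathsf{p}\supset\ov\Omega$ into a Riemann surface (the transition maps $Z_\mathsf{p}\circ Z_\mathsf{q}^{-1}$ being holomorphic), notes that this surface is planar since it sits in $\R^2$, and then invokes the Uniformization Theorem for planar Riemann surfaces (citing Springer) to produce a global holomorphic coordinate $Z$. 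That single citation delivers both of your ``hard parts'' --- single-valuedness and global injectivity --- simultaneously, since the uniformizing map is by construction a biholomorphism onto a domain in $\C$.

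Your analytic-continuation-along-chains setup is therefore superfluous: once you concede in your final paragraph that a uniformization-type argument is required, the chain construction can be discarded, as uniformization produces $Z$ directly. Two further points. First, your appeal to the normal form of Proposition~\ref{normalization} is out of place: the present statement assumes only hypocomplexity, not conditions (ii)--(iii), so that proposition is not available here. Second, even were it available, a local normal form near $\Sigma$ cannot by itself rule out monodromy around loops in a possibly multiply connected $\Omega$; that is a genuinely global fact, and planarity alone (short of invoking uniformization) does not trivialize the cocycle.
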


\begin{proof}
Since $L$ is hypocomplex, then for every $\mathsf{p}\in\ov{\Omega}$ there exists a $C^{1+\epsilon}$-first integral
$Z_\mathsf{p}$ defined in an open set $U_\mathsf{p}\subset\tilde{\Omega}$ with $\mathsf{p}\in U_\mathsf{p}$.
The collection $\{ U_\mathsf{p},Z_\mathsf{p}\}_{\mathsf{p}\in\ov{\Omega}}$ defines a structure of a Riemann surface on the open
set $\bigcup_{\mathsf{p}\in\ov{\Omega}}U_\mathsf{p}$, since the transition functions $h_{\mathsf{p}\mathsf{q}}=Z_\mathsf{p}\circ Z_\mathsf{q}^{-1}$ are holomorphic
functions on $Z_\mathsf{q}(U_\mathsf{p}\cap U_\mathsf{q})$.
The existence of a global first integral $Z$ follows from the Uniformization Theorem of the planar Riemann surfaces
 $\bigcup_{\mathsf{p}\in\ov{\Omega}}U_\mathsf{p}$ (see {\cite{Springer}}).
\end{proof}

\begin{remark}
Note that hypocomplexity of $L$ also implies that if $v$ satisfies $Lv=0$ in an open set
$O\subset \Omega$, then $v=h\circ Z$, where $h$ is a holomorphic function defined in $Z(O)$.
\end{remark}

%%%%%%% end of introduction %%%%%%%%%%%%%%%%%

%%%%%%%%%%%%%%%% some lemmas %%%%%%%%%%%%%

\section{Some Lemmas}

We prove some technical Lemmas that will be used in the following sections.

\begin{lemma}[\cite{Mez-CM05}, Lemma 3.1]\label{lemma_Mez-CM05} Let $0<\delta<R$, $0<\tau<1$, $m >0$, and $0\leq\gamma<R$. Then, there exists a constant $C(\tau)>0$ such that
\[
\int_\delta^R\!\!\int_0^{2\pi}\frac{d\theta dr}{|\gamma+r\sin\theta|^\tau r^{m+1}}\leq\frac{C(\tau)}{\delta^{m+\tau}}.
\]
\end{lemma}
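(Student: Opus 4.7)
My plan is to apply Fubini's theorem, integrate in the radial variable $r$ first, and then bound the resulting $\theta$-integral using only the integrability of $1/|\sin\theta|^{\tau}$ near the nodes of $\sin$. The domain $[0,2\pi]$ splits naturally into $\{\sin\theta\geq 0\}$ and $\{\sin\theta<0\}$.

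On $\{\sin\theta\geq 0\}$ the integrand has no singular curve inside the strip, so I would use the pointwise lower bound $(\gamma+r\sin\theta)^{\tau}\geq (r\sin\theta)^{\tau}$ (valid because $\gamma\geq 0$ and $\tau>0$) and compute directly
\[
\int_{\delta}^{R}\frac{dr}{(r\sin\theta)^{\tau}r^{m+1}}\leq \frac{1}{(\sin\theta)^{\tau}}\cdot\frac{1}{(m+\tau)\,\delta^{m+\tau}}.
\]
Integration in $\theta$ contributes $\int_{0}^{\pi}(\sin\theta)^{-\tau}d\theta<\infty$ because $\tau<1$.

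The delicate half is $\{\sin\theta<0\}$. Set $s=-\sin\theta\in(0,1]$ and $r_{0}=\gamma/s$; then $|\gamma+r\sin\theta|=s\,|r-r_{0}|$ and the problem reduces to showing
\[
I(r_{0}):=\int_{\delta}^{R}\frac{dr}{|r-r_{0}|^{\tau}\,r^{m+1}}\leq \frac{C(\tau,m)}{\delta^{m+\tau}}
\]
uniformly in $r_{0}\geq 0$. I would split cases on the location of $r_{0}$. If $r_{0}\leq \delta/2$ then $|r-r_{0}|\geq r/2$ throughout, and similarly if $r_{0}\geq 2R$ then $|r-r_{0}|\geq r$, so in either case the integral is controlled by $\int_{\delta}^{R}r^{-(m+\tau+1)}dr\lesssim \delta^{-(m+\tau)}$. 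When $r_{0}\in(\delta/2,2R)$, I would further split $[\delta,R]$ into the ``far'' pieces $r\leq r_{0}/2$ and $r\geq 2r_{0}$ (where again $|r-r_{0}|\geq r$, handled as above) and the ``near'' piece $r\in[\max(\delta,r_{0}/2),\min(R,2r_{0})]$. On the near piece, $r\sim r_{0}\geq \delta/2$ gives $r^{m+1}\gtrsim r_{0}^{m+1}$, while the substitution $y=r-r_{0}$ with $\tau<1$ produces $\int|y|^{-\tau}dy\lesssim r_{0}^{1-\tau}$. Altogether the near piece is bounded by $r_{0}^{1-\tau}/r_{0}^{m+1}=r_{0}^{-(m+\tau)}\lesssim \delta^{-(m+\tau)}$.

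Combining both halves gives $\int_{\pi}^{2\pi}I(\gamma/s)\,s^{-\tau}d\theta \lesssim \delta^{-(m+\tau)}\int_{\pi}^{2\pi}|\sin\theta|^{-\tau}d\theta$, which is finite exactly because $\tau<1$. The main obstacle is the $\{\sin\theta<0\}$ piece: the integrand blows up along the curve $r\sin\theta=-\gamma$, and although this singularity is integrable in two dimensions for $\tau<1$, one must organize the region $[\delta,R]$ carefully around the moving point $r_{0}=\gamma/s$ to obtain the correct scaling $\delta^{-(m+\tau)}$ uniformly in $\gamma$. The three-region splitting above is the technical device that accomplishes this without losing any power of $\delta$ or introducing $R$-dependence.
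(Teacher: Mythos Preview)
The paper does not prove this lemma; it is simply quoted from \cite{Mez-CM05}, so there is no proof in the text to compare against.

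Your argument is correct and self-contained: the split $\{\sin\theta\ge 0\}$ versus $\{\sin\theta<0\}$, the reduction of the second half to the one-variable quantity $I(r_0)=\int_\delta^R |r-r_0|^{-\tau}r^{-(m+1)}\,dr$, and the three-region analysis of $I(r_0)$ all go through exactly as you describe. One small point worth noting: as written, your near-piece estimate uses $r^{m+1}\ge (r_0/2)^{m+1}$, which introduces a factor $2^{m+1}$ and hence yields a constant $C(\tau,m)$ rather than the $C(\tau)$ in the stated inequality. This is easily repaired: first rescale via $r=\delta\rho$, reducing the claim to
\[
\int_1^{R/\delta}\frac{d\rho}{|\rho-\rho_0|^{\tau}\rho^{m+1}}\le C(\tau)\qquad\text{uniformly in }\rho_0\ge 0,\ m>0;
\]
then on the near piece $\rho\in[\rho_0/2,2\rho_0]$ with $\rho_0>2$ use $\rho^{m+1}\ge \rho\ge \rho_0/2$ (valid because $m>0$ and $\rho>1$), which eliminates the $m$-dependence. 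For the single application later in the paper this distinction is immaterial anyway, since there $m=2q-2$ with $q<2-\tau$, so $m$ is itself bounded in terms of $\tau$.
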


\begin{lemma}\label{lemmaintegral}
Let $R>0$, $0<\tau<1$, $\gamma\in\mathbb{R}$, and $1 < q < 2-\tau$. Then, there exists a constant $M(q,\tau)>0$ such that
\[
I= \int_{0}^{2\pi}\int_{0}^{R}\frac{drd\theta}{|\gamma+r\sin\theta|^{\tau}r^{q-1}}\leq M(q,\tau)R^{2-\tau-q}.
\]
\end{lemma}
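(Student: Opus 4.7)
The plan is to convert the double integral to Cartesian coordinates, reduce it to a one-dimensional integral in $\eta = r\sin\theta$, and bound that integral uniformly in $\gamma$ by a splitting argument.

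First, I would pass to $\xi = r\cos\theta$, $\eta = r\sin\theta$, using $r\,dr\,d\theta = d\xi\,d\eta$, to obtain
\[
I = \int_{B_R}\frac{d\xi\,d\eta}{|\gamma+\eta|^\tau(\xi^2+\eta^2)^{q/2}},
\]
where $B_R$ is the disk of radius $R$ centered at the origin. For each fixed $\eta$ with $0 < |\eta| \le R$, I would estimate the inner $\xi$-integral by extending the range of integration to all of $\mathbb{R}$ and substituting $\xi = |\eta| t$:
\[
\int_{-\sqrt{R^2-\eta^2}}^{\sqrt{R^2-\eta^2}}\frac{d\xi}{(\xi^2+\eta^2)^{q/2}} \le |\eta|^{1-q}\int_{-\infty}^{\infty}\frac{dt}{(1+t^2)^{q/2}} = C_1(q)|\eta|^{1-q},
\]
the integral on the right being finite precisely because $q > 1$. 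The problem thus reduces to bounding
\[
J(\gamma, R) := \int_{-R}^{R}\frac{|\eta|^{1-q}}{|\gamma+\eta|^\tau}\,d\eta
\]
by a constant multiple of $R^{2-q-\tau}$, uniformly in $\gamma \in \mathbb{R}$.

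For the latter, I would split $[-R, R]$ into the two regions $\{|\eta| \le |\gamma+\eta|\}$ and $\{|\eta| > |\gamma+\eta|\}$. On the first region $|\gamma+\eta|^{-\tau} \le |\eta|^{-\tau}$, so the integrand is bounded by $|\eta|^{1-q-\tau}$; on the second, since $1-q < 0$ the function $x \mapsto x^{1-q}$ is decreasing on $(0,\infty)$, so $|\eta|^{1-q} \le |\gamma+\eta|^{1-q}$ and the integrand is bounded by $|\gamma+\eta|^{1-q-\tau}$. The hypothesis $1 < q < 2-\tau$ places the exponent $1-q-\tau$ in $(-1, 0)$, making both pieces integrable. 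The first piece yields $2R^{2-q-\tau}/(2-q-\tau)$ at once; for the second, the substitution $v = \gamma+\eta$ produces $\int_{\gamma-R}^{\gamma+R}|v|^{1-q-\tau}\,dv$, which is maximized over $\gamma$ when the interval is centered at $0$ (because $|v|^{1-q-\tau}$ is even and decreasing on $[0,\infty)$), yielding the same bound.

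The only delicate point is obtaining the estimate uniformly in $\gamma$, since the limits $[-R,R]$ do not translate; this is handled by the centering observation in the last step. Everything else is elementary.
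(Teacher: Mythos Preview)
Your proof is correct and considerably more efficient than the paper's. The paper stays in polar coordinates throughout: after disposing of the case $\gamma=0$, it rescales by $r=\rho\gamma$, then splits according to whether $0<\gamma<R$ or $\gamma\ge R$, and in the former case further decomposes the $\theta$-integral into several arcs (based on the sign of $\sin\theta$ and an auxiliary angle $\theta_0$ with $-\sin\theta_0=\gamma/2R$), estimating each piece by hand. Your passage to Cartesian coordinates separates the two singularities cleanly: the $\xi$-integral absorbs the $r^{-q}$ singularity uniformly via the scaling $\xi=|\eta|t$ (using only $q>1$), and what remains is a one-dimensional integral with two power singularities at $\eta=0$ and $\eta=-\gamma$, which your comparison $\min(|\eta|,|\gamma+\eta|)^{1-q-\tau}$ and the centering observation handle in a couple of lines. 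The paper's route is self-contained in polar coordinates and mirrors the computations needed later for Lemma~\ref{lemma_holder}, but your argument is shorter, avoids all case analysis in $\gamma$, and makes the dependence $M(q,\tau)R^{2-\tau-q}$ transparent.
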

\begin{proof}
To prove the Lemma it is enough consider $\gamma\geq0$.
If $\gamma=0$ then
\[
I=\int_{0}^{2\pi}\int_{0}^{R}\frac{drd\theta}{|r\sin\theta|^{\tau}r^{q-1}}=
\int_{0}^{2\pi}\frac{d\theta}{|\sin\theta|^{\tau}} \int_{0}^{R}\frac{dr}{r^{\tau+q-1}}=  C(\tau)\displaystyle\frac{R^{2-\tau-q}}{2-\tau-q},
\]
where
\[
C(\tau)=\int_{0}^{2\pi}\frac{d\theta}{|\sin\theta|^{\tau}} <\infty.
\]

Now, suppose $\gamma>0$. Let $r=\rho \gamma$. We have
\begin{equation}\label{eq_J_1}
I=\gamma^{2-\tau-q}\int_{0}^{2\pi}\int_{0}^{\frac{R}{\gamma}}\frac{d\rho d\theta}{|\rho \sin\theta+1|^{\tau}\rho^{q-1}}\doteq \gamma^{2-\tau-q}\,J.
\end{equation}

To estimate $J$, we consider two cases: $0<\gamma<R$ and $\gamma\geq R$.\vspace{10pt}

Assume that $0<\gamma<R$. We can write
\[
J=
\int_{0}^{\pi}\int_{0}^{\frac{R}{\gamma}}\frac{d\rho d\theta}{|\rho \sin\theta+1|^{\tau}\rho^{q-1}}+\int_{\pi}^{2\pi}\int_{0}^{\frac{R}{\gamma}}\frac{d\rho d\theta}{|\rho \sin\theta+1|^{\tau}\rho^{q-1}}
\doteq J_1+J_2.
\]
For $\theta \in [0,\pi]$ we have $\sin\theta\geq 0$; consequently,  $\rho\sin\theta+1 \geq \rho\sin\theta$
 and $\rho\in [0,R/\gamma]$. Hence,
\[
J_1
\leq \int_{0}^{\pi}\int_{0}^{\frac{R}{\gamma}}\frac{d\rho d\theta}{|\rho \sin\theta|^{\tau}\rho^{q-1}}
\leq \frac{C(\tau)}{2-\tau-q}\left(\frac{R}{\gamma}\right)^{2-\tau-q} .
\]
Next, we will estimate $J_2$.
Let $\pi < \theta_{0} < 3\pi/2$ such that $-\sin\theta_{0} =\gamma/2R$. We can write
\begin{align*}
J_2 & =\int_{\pi}^{\theta_0}\int_{0}^{\frac{R}{\gamma}}\frac{d\rho d\theta}{|\rho \sin\theta+1|^{\tau}\rho^{q-1}}
+\int_{\theta_0}^{\frac{3\pi}{2}}\int_{0}^{\frac{R}{\gamma}}\frac{d\rho d\theta}{|\rho \sin\theta+1|^{\tau}\rho^{q-1}}\\
&
+\int_{\frac{3\pi}{2}}^{3\pi-\theta_0}\int_{0}^{\frac{R}{\gamma}}\frac{d\rho d\theta}{|\rho \sin\theta+1|^{\tau}\rho^{q-1}}
+\int_{3\pi-\theta_0}^{2\pi}\int_{0}^{\frac{R}{\gamma}}\frac{d\rho d\theta}{|\rho \sin\theta+1|^{\tau}\rho^{q-1}}\\
&
\doteq J_{2,1}+J_{2,2}+J_{2,3}+J_{2,4}.
\quad\quad\quad\quad\quad\quad\quad\quad\quad\quad\quad\quad\quad\quad\quad\quad\,\,
\end{align*}
Let $\varphi = -\theta + 3\pi$. We have
\[
J_{2,3}=\int_{\frac{3\pi}{2}}^{3\pi -\theta_{0}}\int_{0}^{\frac{R}{\gamma}}\frac{d\rho d\theta}{|\rho \sin\theta+1|^{\tau}\rho^{q-1}}
=\int_{\theta_{0}}^{\frac{3\pi}{2}}\int_{0}^{\frac{R}{\gamma}}\frac{d\rho d\varphi}{|\rho \sin\varphi+1|^{\tau}\rho^{q-1}}=J_{2,2}.
\]
Also,
\[
J_{2,4}=\int_{3\pi -\theta_{0}}^{2\pi}\int_{0}^{\frac{R}{\gamma}}\frac{d\rho d\theta}{|\rho \sin\theta+1|^{\tau}\rho^{q-1}}
= \int_{\pi}^{\theta_{0}}\int_{0}^{\frac{R}{\gamma}}\frac{d\rho d\varphi}{|\rho \sin\varphi+1|^{\tau}\rho^{q-1}}=J_{2,1}.
\]
%\vspace{10pt}
%\noindent\textit{Estimating $J_{2,1}$.}%\vspace{10pt}

Let us estimate $J_{2,1}$. Note that
\[
0\leq-\sin\theta\leq\frac{\gamma}{2R}<\frac{1}{2}, \quad \textrm{for} \quad 0\in[\pi,\theta_0];
\]
consequently, for $\theta\in [\pi,\theta_0]$ and $0<\rho<R/\gamma$, we have
\[
\rho|\sin\theta|<\frac{1}{2}<1+\rho\sin\theta.
\]
Hence,
\[
J_{2,1}
\leq \int_{\pi}^{\theta_{0}}\int_{0}^{\frac{R}{\gamma}} \frac{d\rho d\theta}{\rho^{\tau} |\sin\theta|^{\tau}\rho^{q-1}}
\leq \frac{C(\tau)}{4(2-\tau-q)}\left(\frac{R}{\gamma}\right)^{2-\tau-q}.
\]
\vspace{5pt}

Let us estimate $J_{2,2}$.
Let  $\varphi= \theta$ and $t=\rho \sin\theta+1$.
We have
\[
J_{2,2} =\int_{\theta_0}^{\frac{3\pi}{2}}\int_{0}^{\frac{R}{\gamma}}\frac{d\rho d\theta}{|\rho \sin\theta+1|^{\tau}\rho^{q-1}}
=\int_{\theta_{0}}^{\frac{3\pi}{2}}\int_{1}^{\frac{R}{\gamma} \sin\varphi+1}\frac{1}{\sin\varphi}\frac{dt d\varphi}{|t|^{\tau} \Big(\frac{t-1}{\sin\varphi}\Big)^{q-1}}
\]
\[
=\int_{\theta_{0}}^{\frac{3\pi}{2}} |\sin\varphi|^{q-2}\Big(\int_{\frac{R}{\gamma} \sin\varphi+1}^{1}\frac{dt}{|t|^{\tau}|t-1|^{q-1}}\Big)d\varphi
\quad\quad\quad\quad\quad\quad\quad\quad\quad\,
\]
\[
\leq\int_{\theta_{0}}^{\frac{3\pi}{2}} |\sin\varphi|^{q-2}\Big(\int_{-\frac{R}{\gamma}}^{1}\frac{dt}{|t|^{\tau}|t-1|^{q-1}}\Big)d\varphi.
\leq C(q,\tau)\left(\frac{R}{\gamma}\right)^{2-\tau-q},
\]
for some constant $C(q,\tau)>0$.

Hence, by the calculations above, we can find a constant $C_1(q,\tau)>0$ such that
\begin{equation}\label{eq_J_2}
J\leq C_1(q,\tau)\left(\frac{R}{\gamma}\right)^{2-\tau-q}.
\end{equation}
\vspace{5pt}

Now, assume that $\gamma\geq R$. In this case, there exists a constant $C(q,\tau)>0$ such that
\begin{equation}\label{eq_J_3}
J\!=\!\!\int_{0}^{2\pi}\!\!\!\int_{0}^{\frac{R}{\gamma}}\frac{d\rho d\theta}{|\rho\sin\theta+1|^{\tau}\rho^{q-1}}
\leq\int_{0}^{2\pi}\!\!\!\int_{0}^{\frac{R}{\gamma}}\frac{d\rho d\theta}{|1-\rho|^{\tau}\rho^{q-1}}
\leq C(q,\tau)\left(\frac{R}{\gamma}\right)^{2-\tau-q}
\end{equation}
%\vspace{10pt}

Finally, estimates (\ref{eq_J_1}), (\ref{eq_J_2}) and (\ref{eq_J_3}) show that
there exists a constant $M(q,\tau)>0$ such that
\[
I\leq M(q,\tau)R^{2-\tau-q}.
\]
\end{proof}

%%%%%%%%%%%%

\begin{lemma}\label{lemma_holder} Let $R>0$, $\gamma\geq0$, $0<\tau<1$, $1\leq q<2-\tau$, and $0\leq\varphi\leq\pi$. Then, there exists a constant $C(q,\tau)>0$ such that
\begin{equation}\label{H}
H=\int_0^{2\pi}\!\!\int_0^R\frac{r dr d\theta}{|\gamma+r\sin(\theta+\varphi)|^\tau r^q|re^{i\theta}-1|^q}\leq C(q,\tau).
\end{equation}
\end{lemma}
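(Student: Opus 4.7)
My plan is to pass to the rotated complex variable $w = ze^{i\varphi}$, where $z = re^{i\theta}$. Then $r\,dr\,d\theta = dA_w$, $|z| = |w|$, $|re^{i\theta}-1| = |w-e^{i\varphi}|$, and $r\sin(\theta+\varphi) = \Im w$, so that
\[
H = \int_{|w|\le R} \frac{dA_w}{|\gamma+\Im w|^\tau\,|w|^q\,|w-e^{i\varphi}|^q}.
\]
The integrand has three singular loci: the points $w = 0$ and $w = e^{i\varphi}$, together with the horizontal line $\Im w = -\gamma$. The decisive geometric fact, and the only place where the hypothesis $\varphi\in[0,\pi]$ is used, is that $\sin\varphi \ge 0$ and $\gamma \ge 0$ place the singular line in the closed lower half-plane while both singular points sit in the closed upper half-plane; this separation is what permits a bound uniform in $R$, $\gamma$, $\varphi$.

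I would then partition $D_R = \{|w|\le R\}$ into four regions: $E_0 = \{|w|<1/2\}$, $E_1 = \{|w-e^{i\varphi}|<1/2\}$, the annular remainder $E_2 = \{1/2\le|w|\le 2\}\setminus(E_0\cup E_1)$, and the exterior $E_3 = \{2<|w|\le R\}$ (empty when $R\le 2$). On $E_0$ the factor $|w-e^{i\varphi}|^{-q}$ is bounded by $2^q$, and in polar coordinates the remaining integral is exactly of the form controlled by Lemma~\ref{lemmaintegral} on $[0,1/2]$. On $E_1$ the translation $\tilde w = w - e^{i\varphi}$ moves that singularity to the origin and replaces $\gamma$ by $\gamma' = \gamma + \sin\varphi$; the hypothesis $\sin\varphi \ge 0$ ensures $\gamma' \ge 0$, so Lemma~\ref{lemmaintegral} applies verbatim, with $|w|^{-q}$ harmless because $|w|\ge 1/2$ on $E_1$. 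The annular piece $E_2$, where $|w|$ and $|w-e^{i\varphi}|$ are both bounded away from zero, also reduces immediately to Lemma~\ref{lemmaintegral} on $[0,2]$.

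The exterior piece $E_3$ is the only one requiring real work. There $|w-e^{i\varphi}|\ge |w|/2$ gives $|w|^q|w-e^{i\varphi}|^q \ge \rho^{2q}/2^q$, so the task collapses to bounding
\[
\int_2^R\!\!\int_0^{2\pi}\!\frac{d\rho\,d\omega}{|\gamma+\rho\sin\omega|^\tau\,\rho^{2q-1}}.
\]
When $\gamma < R$, Lemma~\ref{lemma_Mez-CM05} applies directly with $\delta = 2$ and $m = 2q-2 > 0$ (this inequality is where $q > 1$ enters), yielding a constant bound. When $\gamma \ge R$, I would split at $\rho = \gamma/2$: on $\{\rho\le\gamma/2\}$ the denominator is bounded below by $(\gamma/2)^\tau$ and trivial integration suffices; on $\{\rho > \gamma/2\}$ the rescaling $\rho = \gamma\sigma$ reduces the inner integral to an absolute constant multiplied by $\gamma^{2-2q-\tau}$, and $q > 1$ forces $2q+\tau > 2$, so this exponent is negative and, since $\gamma\ge R>2$, the result is uniform. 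The main obstacle is precisely this $\gamma \ge R$ subcase, where neither auxiliary lemma applies out of the box; it is the combination of the dyadic split at $\gamma/2$, the rescaling, and the condition $q > 1$ that finally delivers a $\gamma$-independent bound and closes the argument.
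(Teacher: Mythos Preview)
Your approach is correct and considerably cleaner than the paper's. The paper never performs the rotation $w=ze^{i\varphi}$; it works directly in $(r,\theta)$ and proceeds by a long case analysis: first $\gamma=0$ versus $\gamma>0$, then a radial decomposition (roughly $[0,\tfrac12]$, $[\tfrac12,\tfrac32]$, $[\tfrac32,R]$) with the middle shell further split into angular sectors, and for $\gamma>0$ an additional split at $r=\gamma$ with three subcases according to whether $\gamma$ lies below $\tfrac12$, in $[\tfrac12,\tfrac32]$, or above $\tfrac32$. Your rotation makes the singular line horizontal and places both point singularities $0$ and $e^{i\varphi}$ in the closed upper half-plane, so the geometric reason for uniformity is explicit from the start, and your four-region decomposition $E_0,E_1,E_2,E_3$ replaces the paper's dozen or so subcases. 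The translation $\tilde w=w-e^{i\varphi}$ on $E_1$, reducing the second point singularity to the form of Lemma~\ref{lemmaintegral} with shifted parameter $\gamma'=\gamma+\sin\varphi\ge 0$, is exactly the simplification the paper's argument lacks.

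One small remark: on $E_3$ you invoke Lemma~\ref{lemma_Mez-CM05} with $m=2q-2$, which requires $q>1$, whereas the stated hypothesis allows $q=1$. This is harmless: the paper's own proof repeatedly applies Lemma~\ref{lemmaintegral} with the same $q$, which also demands $q>1$, and every application of the lemma in the paper has $q=p/(p-1)$ with $p$ finite, so $q>1$ automatically. If you want to cover $q=1$ as well, note that your $\gamma\ge R$ argument already works there (the exponent $2-2q-\tau=-\tau$ is still negative, and on $\{\rho\le\gamma/2\}$ the logarithmic $\rho$-integral is absorbed by $\gamma^{-\tau}$), and for $\gamma<R$ one checks that the proof of Lemma~\ref{lemma_Mez-CM05} extends unchanged to $m=0$.
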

\begin{proof}
We will divide the proof in two cases: $\gamma=0$ and $\gamma>0$.
\vspace{10pt}

\noindent\textbf{Case 1}: $\gamma=0$.\ In this case we can write
\[
H=\int_{0}^{2\pi}\int_{0}^R\frac{dr d\theta}{|\sin(\theta+\varphi)|^{\tau}r^{q+\tau-1}|re^{i\theta}-1|^{q}}= H_1+H_2+H_3,
\]
 with
\begin{align*}
H_1 & =\int_{0}^{2\pi}\int_{0}^{\frac{1}{2}}\frac{dr d\theta}{|\sin(\theta+\varphi)|^{\tau}r^{q+\tau-1}|re^{i\theta}-1|^{q}},
\\
H_2 & =\int_{0}^{2\pi}\int_{\frac{1}{2}}^{\frac{3}{2}}\frac{dr d\theta}{|\sin(\theta+\varphi)|^{\tau}r^{q+\tau-1}|re^{i\theta}-1|^{q}},
\\
H_3 & =\int_{0}^{2\pi}\int_{\frac{3}{2}}^R\frac{dr d\theta}{|\sin(\theta+\varphi)|^{\tau}r^{q+\tau-1}|re^{i\theta}-1|^{q}}.
\end{align*}

For $0<r<\displaystyle\frac{1}{2}$ we have $|re^{i\theta}-1| \geq \displaystyle\frac{1}{2}$. Hence, we can find $C_1(q,\tau)>0$ such that
\[
H_{1}\leq 2^{q}\displaystyle\int_{0}^{2\pi}\int_{0}^{\frac{1}{2}}\frac{dr d\theta}{|\sin(\theta+\varphi)|^{\tau}r^{q+\tau-1}}
\leq C_1(q,\tau).
\]

For $\displaystyle\frac{1}{2}<r<\frac{3}{2}$ we have $\displaystyle\Big(\frac{2}{3}\Big)^{\tau +q} < \frac{1}{r^{\tau+q}}<2^{\tau+q}$, and so
\begin{align*}
H_{2} & \leq 2^{\tau+q}\int_{0}^{2\pi}\int_{\frac{1}{2}}^{\frac{3}{2}}\frac{dr d\theta}{|\sin(\theta+\varphi)|^{\tau}|re^{i\theta}-1|^{q}}\\
&
=2^{\tau+q}\sum_{j=1}^4\int_{\frac{(j-1)}{2}}^{\frac{j\pi}{2}}\int_{\frac{1}{2}}^{\frac{3}{2}}\frac{dr d\theta}{|\sin(\theta+\varphi)|^{\tau}|re^{i\theta}-1|^{q}}
\doteq2^{\tau+q}\sum_{j=1}^4H_{2,j}.
\end{align*}
Let $0<\theta_{0} < \pi/2$ such that
\[
1-\frac{\theta^{2}}{2}< \cos\theta < 1-\frac{\theta^{2}}{4}, \quad \textrm{for}  \quad 0< \theta < \theta_{0}.
\]
Then
\[
H_{2,1}
\!=\!\int_{0}^{\theta_0}\!\!\!\int_{\frac{1}{2}}^{\frac{3}{2}}\frac{dr d\theta}{|\sin(\theta+\varphi)|^{\tau}|re^{i\theta}-1|^{q}}+
\int_{\theta_0}^{\frac{\pi}{2}}\!\!\!\int_{\frac{1}{2}}^{\frac{3}{2}}\frac{dr d\theta}{|\sin(\theta+\varphi)|^{\tau}|re^{i\theta}-1|^{q}}
\!=\!H_{2,1}^1+H_{2,1}^2.
\]
For  $0<\theta < \theta_{0}$ and $1/2\leq r\leq3/2$ we have
\[
\frac{(r-1)^{2}+\theta^2}{4}\leq(r-1)^{2}+\frac{\theta^{2}}{4}\leq(r-1)^{2}+r\frac{\theta^{2}}{2} \leq |re^{i\theta}-1|^{2}
\]
and, consequently,
\[
\frac{1}{|re^{i\theta}-1|^{2}} \leq \frac{4}{(r-1)^{2}+\theta^2}.
\]
Hence,
\[
H_{2,1}^1 \leq
2^q\int_{0}^{\theta_{0}}\int_{\frac{1}{2}}^{\frac{3}{2}}\frac{dr d\theta}{|\sin(\theta+\varphi)|^{\tau}[(r-1)^{2}+\theta^2]^\frac{q}{2}}\doteq 2^qJ.
\]
Suppose that $0\leq\varphi\leq\pi/2$. Then, $0 <\theta +\varphi <\pi/2+\theta_{0}<\pi$ and  $|\sin(\theta+\varphi)| \geq C|\theta+\varphi|$,
for some constant $C>0$. Hence,
\begin{align*}
C^\tau J & \leq\int_{0}^{\theta_{0}}\!\!\!\int_{\frac{1}{2}}^{\frac{3}{2}}\frac{dr d\theta}{|\theta+\varphi|^{\tau}[(r-1)^{2}+{\theta^{2}}]^{\frac{q}{2}}}
\leq \int_{D((0,1);1)}\frac{dr d\theta}{|\theta+\varphi|^{\tau}[(r-1)^{2}+{\theta^{2}}]^{\frac{q}{2}}}
\\
 & =\int_0^1\!\!\int_0^{2\pi}\frac{d\rho dt}{|\varphi+\rho\sin t|^\tau \rho^{q-1}}\leq M(q,\tau),
\end{align*}
where the last inequality is obtained by Lemma \ref{lemmaintegral}.
Similar estimate can obtained in the case $\pi/2<\varphi\leq\pi$.
Therefore,
\begin{equation}\label{eq_paulo_1}
\int_{0}^{\theta_{0}}\int_{\frac{1}{2}}^{\frac{3}{2}}\frac{dr d\theta}{|\sin(\theta+\varphi)|^{\tau}|re^{i\theta}-1|^{q}}\leq\frac{2^q\,M(q,\tau)}{C^\tau}.
\end{equation}

For $\theta_{0} < \theta < \frac{\pi}{2}$ we have
\[
|re^{i\theta}-1|^{2}\geq 1-\cos\theta\geq 1-\cos\theta_0
\]
and
\begin{equation}\label{eq_paulo_2}
\begin{split}
H_{2,1}^1 & =\int_{\theta_{0}}^{\frac{\pi}{2}}\int_{\frac{1}{2}}^{\frac{3}{2}}\frac{dr d\theta}{|\sin(\theta+\varphi)|^{\tau}|re^{i\theta}-1|^{q}}
\\
& \leq \frac{1}{(1-\cos\theta_{0})^{\frac{q}{2}}}\int_{\theta_{0}}^{\frac{\pi}{2}}\frac{ d\theta}{|\sin(\theta+\varphi)|^{\tau}}
\leq \frac{C_{1}(\tau)}{(1-\cos\theta_{0})^{\frac{q}{2}}},
\end{split}
\end{equation}
for some constant $C_1(\tau)>0$.
It follows from (\ref{eq_paulo_1}) and (\ref{eq_paulo_2}), that
\begin{equation}\label{eq_paulo_3}
H_{2,1}\leq\frac{2^q\,M(q,\tau)}{C^\tau}+\frac{C_{1}(\tau)}{(1-\cos\theta_{0})^{\frac{q}{2}}}.
\end{equation}

To estimate $H_{2,2}$, we start by using a change of variable $\theta=\alpha+\pi$ in the integral to obtain
\[
H_{2,2}
=\int_{\frac{3\pi}{2}}^{2\pi}\int_{\frac{1}{2}}^{\frac{3}{2}}\frac{dr d\alpha}{|\sin(\alpha+\varphi)|^{\tau}|re^{i\alpha}+1|^{q}}.
\]
Note that $|re^{i\alpha}+1|\geq r\cos\alpha+1\geq1$, for $3\pi/2\leq\alpha\leq2\pi$; hence,
\begin{equation}\label{eq_paulo_4}
H_{2,2}\leq\int_{\frac{3\pi}{2}}^{2\pi}\frac{d\alpha}{|\sin(\alpha+\varphi)|^{\tau}}= C_2(\tau)<\infty.
\end{equation}
Similar estimates can be obtained for $H_{2,3}$ and $H_{2,4}$.
Therefore, we can find a constant $C_2(q,\tau)>0$ such that $H_2\leq C_2(q,\tau)$.

The estimation of $H_3$ is obtained as follows.
Since $r > 3/2$, then $|re^{i\theta}-1| \geq |r-1| = r-1 \geq r/3$ and
\begin{align*}
H_{3} &
\leq 3^{q}\int_{0}^{2\pi}\int_{\frac{3}{2}}^{R}\frac{dr d\theta}{|\sin(\theta+\varphi)|^{\tau}r^{2q+\tau-1}}
\\
&
\leq 3^{q}C_3(\tau)\int_{\frac{3}{2}}^R\frac{dr}{r^{2q+\tau-1}}
\leq\frac{3^{q}\,C_3(\tau)}{2q+\tau-2}\Big(\frac{2}{3}\Big)^{2q+\tau-2},
\end{align*}
where $C_3(\tau)$ is a positive constant.
This completes  the proof of (\ref{H}) in the case $\gamma=0$.

\vspace{10pt}

\noindent\textbf{Case 2:} $\gamma>0$.
In this case, $H$ (given by (\ref{H})) can be rewritten in the form $H=H_1+H_2$, where
\begin{align*}
H_1 & =\int_0^{2\pi}\int_0^{\gamma}\frac{r dr d\theta}{|\gamma+r\sin(\theta+\varphi)|^\tau r^q|re^{i\theta}-1|^q},
\\
H_2 & =\int_0^{2\pi}\int_\gamma^R\frac{r dr d\theta}{|\gamma+r\sin(\theta+\varphi)|^\tau r^q|re^{i\theta}-1|^q}.
\end{align*}
To estimate $H_1$, we consider three cases depending on the values of $\gamma$.

\noindent\textit{First case:}  $0<\gamma\le \displaystyle\frac{1}{2}$.
In this case, since $0<r<1/2$, we have $|re^{i\theta}-1|\geq1-r\geq1/2$. Hence, by Lemma \ref{lemmaintegral},
\[
H_1\leq2^q\int_0^{2\pi}\int_0^{\gamma}\frac{dr d\theta}{|\gamma+r\sin(\theta+\varphi)|^\tau r^{q-1}}
\leq 2^qM(q,\tau).
\]

\noindent\textit{Second case:}  $\displaystyle\frac{1}{2}<\gamma\le\frac{3}{2}$.
We write $H_1=H_{1,1}+H_{1,2}$, where
\begin{align*}
H_{1,1} & =\int_0^{2\pi}\int_0^{\frac{1}{2}}\frac{r dr d\theta}{|\gamma+r\sin(\theta+\varphi)|^\tau r^q|re^{i\theta}-1|^q},
\\
H_{1,2} & =\int_0^{2\pi}\int_{\frac{1}{2}}^\gamma\frac{r dr d\theta}{|\gamma+r\sin(\theta+\varphi)|^\tau r^q|re^{i\theta}-1|^q}.
\end{align*}

Note that estimation of $H_{1,1}$ is given by the previous case.

For $H_{1,2}$ we have
\begin{align*}
H_{1,2} &
\leq 2^{q-1}\int_0^{2\pi}\int_{\frac{1}{2}}^{\gamma}\frac{dr d\theta}{|\gamma-r|^\tau|re^{i\theta}-1|^q}
\\
 & =2^{q-1}\sum_{j=1}^4
\int_{\frac{(j-1)\pi}{2}}^{\frac{j\pi}{2}}\int_{\frac{1}{2}}^{\gamma}\frac{dr d\theta}{|\gamma-r|^\tau|re^{i\theta}-1|^q}
=2^{q-1}\sum_{j=1}^4 H_{1,2}^j.
\end{align*}
To estimate $H_{1,2}^1$ we write
\[
H_{1,2}^1
=\int_0^{\theta_0}\int_{\frac{1}{2}}^{\gamma}\frac{dr d\theta}{|\gamma-r|^\tau|re^{i\theta}-1|^q}+
\int_{\theta_0}^{\frac{\pi}{2}}\int_{\frac{1}{2}}^{\gamma}\frac{dr d\theta}{|\gamma-r|^\tau|re^{i\theta}-1|^q},
\]
with $0<\theta_0<\pi/2$  such that $1-\theta^2/2<\cos\theta<1-\theta^2/4$ for $0<\theta<\theta_0$.
Note that this choice of $\theta_0$ implies that $|re^{i\theta}-1|>1-\cos\theta_0$ for $\theta\geq\theta_0$. Hence,
\[
H_{1,2}^1\leq 2^q\int_0^{\theta_0}\int_{\frac{1}{2}}^{\gamma}\frac{dr d\theta}{|\gamma-r|^\tau((r-1)^2+\theta^2)^\frac{q}{2}}
+\frac{\frac{\pi}{2}-\theta_0}{(1-\cos\theta_0)^\frac{q}{2}}\frac{1}{1-\tau}.
\]
Now, by using polar coordinates $r=1+\rho \sin\phi$, $\theta =\rho\cos\phi$ in the integral, and using Lemma \ref{lemmaintegral}, we obtain
\[
\int_0^{\theta_0}\int_{\frac{1}{2}}^{\gamma}\frac{dr d\theta}{|\gamma-r|^\tau((r-1)^2+\theta^2)^\frac{q}{2}}\leq
\int_0^1\int_0^{2\pi}\frac{d\varphi d\rho}{|\gamma-1-\rho\sin\varphi|^\tau \rho^{q-1}}\leq M(q,\tau).
\]
Hence, we can find a constant $C_1(q,\tau)>0$ such that
$H_{1,2}^1\leq C_1(q,\tau)$.
Proceeding as above, we can find a constant $C_2(q,\tau)>0$ such that
$H_{1,2}^4\leq C_2(q,\tau)$.

For $H_{1,2}^2$, we have
\[
H_{1,2}^2=
\int_{\frac{3\pi}{2}}^{2\pi}\int_{\frac{1}{2}}^{\gamma}\frac{dr d\theta}{|\gamma-r|^\tau|re^{i\theta}+1|^q}
\leq\int_{\frac{3\pi}{2}}^{2\pi}\int_{\frac{1}{2}}^{\gamma}\frac{dr d\theta}{|\gamma-r|^\tau}\leq\frac{\pi}{2(1-\tau)},
\]
since $|re^{i\theta}+1|\geq1 $ for $\,\, \frac{3\pi}{2}\leq\theta\leq2\pi$.
Analogous estimates can be used to show that $H_{1,2}^3\leq \displaystyle\frac{\pi}{2(1-\tau)}$.
Therefore, we can find a constant $C_3(q,\tau)>0$ such that $H_1\leq C_3(q,\tau)$.

\vspace{10pt}

\noindent\textit{Third case:}  $\gamma>\displaystyle\frac{3}{2}$.
In this case we write $H_1=H_{1,1}+H_{1,2}$, where
\begin{align*}
H_{1,1} & =\int_0^{2\pi}\int_0^{\frac{3}{2}}\frac{r dr d\theta}{|\gamma+r\sin(\theta+\varphi)|^\tau r^q|re^{i\theta}-1|^q},
\\
H_{1,2}& =\int_0^{2\pi}\int_{\frac{3}{2}}^\gamma\frac{r dr d\theta}{|\gamma+r\sin(\theta+\varphi)|^\tau r^q|re^{i\theta}-1|^q}.
\end{align*}
Estimate for $H_{1,1}$ follows from the previous case.
For $H_{1,2}$, note that since $r\geq3/2$ we have $|re^{i\theta}-1|\geq r/2$ and
\[
H_{1,2}\leq2^q\int_0^{2\pi}\int_{\frac{3}{2}}^\gamma\frac{dr d\theta}{|\gamma+r\sin(\theta+\varphi)|^\tau r^{2q-1}}=C(q,\tau)<\infty.
\]

\vspace{10pt}

Now we estimate $H_2$.
Recall that
\[
H_2=\int_0^{2\pi}\!\!\!\int_\gamma^{R}\frac{r dr d\theta}{|\gamma+r\sin(\theta+\varphi)|^\tau r^q|re^{i\theta}-1|^q}.
\]
As done for $H_1$ we consider three cases:

\noindent\textit{First case:}  $\displaystyle\frac{3}{2}\le \gamma< R$.
Since $r>\displaystyle\frac{3}{2}$ implies $\displaystyle|re^{i\theta}-1|\geq\frac{r}{3}$, we have
\[
H_2\leq3^q\int_0^{2\pi}\!\!\!\!\int_\gamma^{R}\frac{dr d\theta}{|\gamma+r\sin(\theta+\varphi)|^\tau r^{2q-1}}
=3^q\int_0^{2\pi}\!\!\!\!\int_\gamma^{R}\frac{dr d\theta}{|\gamma+r\sin\theta|^\tau r^{2q-1}}.
\]
Hence, the estimative for $H_2$ follows from Lemma \ref{lemma_Mez-CM05}.
\vspace{10pt}

\noindent\textit{Second case:}  $\displaystyle\frac{1}{2}\le \gamma<\frac{3}{2}$.
In this case we can write $H_2=H_{2,1}+H_{2,2}$, where
\begin{align*}
H_{2,1} & =\int_{-\frac{\pi}{2}}^{\frac{3\pi}{2}}\!\!\int_\gamma^{\frac{3}{2}}\frac{r dr d\theta}{|\gamma+r\sin(\theta+\varphi)|^\tau r^q|re^{i\theta}-1|^q}=H_{2,1}^1+H_{2,1}^2,
\\
H_{2,2} & =\int_0^{2\pi}\!\!\int_{\frac{3}{2}}^{R}\frac{r dr d\theta}{|\gamma+r\sin(\theta+\varphi)|^\tau r^q|re^{i\theta}-1|^q}
\end{align*}
and,
\begin{align*}
H_{2,1}^1 & =\int_{-\frac{\pi}{2}}^\frac{\pi}{2}\int_\gamma^{\frac{3}{2}}\frac{dr d\theta}{|\gamma+r\sin\theta|^\tau r^{q-1}|re^{i(\theta-\varphi)}-1|^q},
\\
H_{2,1}^2 & =\int_{\frac{\pi}{2}}^\frac{3\pi}{2}\int_\gamma^{\frac{3}{2}}\frac{dr d\theta}{|\gamma+r\sin\theta|^\tau r^{q-1}|re^{i(\theta-\varphi)}-1|^q}.
\end{align*}
Note that estimation of $H_{2,2}$ is given by the previous case.

To estimate $H_{2,1}^1$, note that
since $1/2\le \gamma<r<3/2$ we have $1/3<\gamma/r<1$. Let $\theta_1\in(-\pi/2,-\alpha_0)$ be such that
$-\gamma/r=\sin\theta_1$, where $\sin\alpha_0=1/3$.
We have then
\[
\frac{1}{|\gamma+r\sin\theta|^\tau}=\frac{|\theta-\theta_1|^\tau}{r^\tau|\sin\theta-\sin\theta_1|^\tau}\frac{1}{|\theta-\theta_1|^\tau}
\leq\frac{A(\tau)}{r^\tau|\cos\theta_1|^\tau}\frac{1}{|\theta-\theta_1|^\tau},
\]
for some constant $A(\tau)>0$; hence, substituting $\cos\theta_1$ by $\sqrt{r^2-\gamma^2}/r$ we obtain
\[
\frac{1}{|\gamma+r\sin\theta|^\tau}\leq\frac{A(\tau)}{(r^2-\gamma^2)^\frac{\tau}{2}}\frac{1}{|\theta-\theta_1(r)|^\tau}.
\]
Therefore,
\begin{align*}
H_{2,1}^1 & \leq2^{q-1}A(\tau)\int_{-\frac{\pi}{2}}^\frac{\pi}{2}\int_\gamma^{\frac{3}{2}}\frac{dr d\theta}{(r^2-\gamma^2)^\frac{\tau}{2} |\theta-\theta_1(r)|^\tau |re^{i(\theta-\varphi)}-1|^q}
\\ &
=2^{q-1}A(\tau)\int_{-\frac{\pi}{2}-\varphi}^{\frac{\pi}{2}-\varphi}\int_\gamma^{\frac{3}{2}}\frac{dr d\theta}{(r^2-\gamma^2)^\frac{\tau}{2} |\theta-\theta_1(r)+\varphi|^\tau |re^{i\theta}-1|^q}
\\ &
\leq2^{q-1}A(\tau)\sum_{j=1}^4\int_{\frac{(j-4)\pi}{2}}^{\frac{(j-3)\pi}{2}}\int_\gamma^{\frac{3}{2}}\frac{dr d\theta}{(r^2-\gamma^2)^\frac{\tau}{2} |\theta-\theta_1(r)+\varphi|^\tau |re^{i\theta}-1|^q}
\\ &
=2^{q-1}A(\tau)\sum_{j=1}^4 H_{2,1}^{1,j}\, .
\end{align*}
We have
\[
H_{2,1}^{1,1}
=\int_{-\frac{\pi}{2}}^{0}\!\int_\gamma^{\frac{3}{2}}\frac{dr d\theta}{(r^2-\gamma^2)^\frac{\tau}{2} |\theta-\pi-\theta_1(r)+\varphi|^\tau |re^{i\theta}+1|^q}.
\]
For $-\pi/2\leq\theta\leq0$ we have $|re^{i\theta}+1|\geq1+r\cos\theta\geq1$ and
\[
H_{2,1}^{1,1}\leq\int_{-\frac{\pi}{2}}^{0}\!\int_\gamma^{\frac{3}{2}}\frac{dr d\theta}{(r^2-\gamma^2)^\frac{\tau}{2} |\theta-\pi-\theta_1(r)+\varphi|^\tau}\leq C(\tau),
\]
for some constant $C(\tau)>0$.
Similarly $H_{2,1}^{1,2}\leq\tilde{C}(\tau)$, for some $\tilde{C}(\tau)>0$.

To estimate $H_{2,1}^{1,3}$,
let  $0<\theta_0<\alpha_0/2$ such that
\[
1-\frac{\theta^2}{2}<\cos\theta<1-\frac{\theta^2}{4}, \quad \textrm{for} \quad -\theta_0\leq\theta\leq0.
\]
Note that $-\theta_0\leq\theta\leq0$ implies
\[
\theta-\theta_1(r)+\varphi\geq-\theta_0-\theta_1(r)+\varphi\geq\alpha_0-\frac{\alpha_0}{2}+\varphi\geq\frac{\alpha_0}{2}.
\]
Moreover, for $-\pi/2\leq\theta\leq-\theta_0$ we have
\[
|re^{i\theta}-1|^2\geq1-\cos\theta_0.
\]
Hence,
\begin{align*}
H_{2,1}^{1,3} & \leq\left(\frac{2}{\alpha_0}\right)^\tau\int_{-\theta_0}^0\!\int_\gamma^\frac{3}{2}\frac{drd\theta}{(r^2-\gamma^2)^\frac{\tau}{2}|re^{i\theta}-1|^q}
\\ &
+\left(\frac{1}{1-\cos\theta_0}\right)^\frac{q}{2}\int_{-\frac{\pi}{2}}^{-\theta_0}\!\int_\gamma^\frac{3}{2}\frac{dr d\theta}{(r^2-\gamma^2)^\frac{\tau}{2} |\theta-\theta_1(r)+\varphi|^\tau}
\\ &
\leq\left(\frac{2}{\alpha_0}\right)^\tau\int_{-\theta_0}^0\!\int_\gamma^\frac{3}{2}\frac{drd\theta}{(r^2-\gamma^2)^\frac{\tau}{2}|re^{i\theta}-1|^q}+C_1(\tau,q),
\end{align*}
for some constant $C_1(\tau,q)>0$.
Note that for $r>1/2$ and $-\theta_0\leq\theta\leq0$ we have
\[
|re^{i\theta}-1|^2\geq\frac{(r-1)^2+\theta^2}{4};
\]
hence, since $r\geq\gamma\geq1/2$ implies $r^2-\gamma^2\geq r-\gamma$, we have
\begin{align*}
\int_{-\theta_0}^0\!\int_\gamma^\frac{3}{2}\frac{drd\theta}{(r^2-\gamma^2)^\frac{\tau}{2}|re^{i\theta}-1|^q} & \leq
2^q\int_{-\theta_0}^0\!\int_\gamma^\frac{3}{2}\frac{drd\theta}{(r-\gamma)^\frac{\tau}{2}((r-1)^2+\theta^2)^\frac{q}{2}}
\\ &
\leq2^q\int_{D((0,1);1)}\frac{drd\theta}{(r-\gamma)^\frac{\tau}{2}((r-1)^2+\theta^2)^\frac{q}{2}}
\\ &
=2^q\int_0^1\!\!\int_0^{2\pi}\frac{drd\theta}{|1-\gamma+\rho\sin\alpha|^\frac{\tau}{2}\rho^{q-1}}
\leq C_2(\tau,q),
\end{align*}
where the last estimate follows from Lemma \ref{lemmaintegral}.
Therefore, we can find a constant $C_3(\tau,q)>0$ such that $H_{2,1}^{1,3}\leq C_3(\tau,q)$.
Similar arguments can be used to estimate $H_{2,1}^{1,4}$ and
$H_{2,1}^2$. Therefore,  we can find a constant $C(\tau,q)>0$ for which $H_{2,1}\leq C(\tau,q)$.
\vspace{10pt}

\noindent\textit{Third case:}  $\displaystyle0<\gamma\le \frac{1}{2}$.
In this case
\[
H_2\!=\!\!\int_0^{2\pi}\!\!\!\int_\gamma^{\frac{1}{2}}\!\!\frac{r dr d\theta}{|\gamma+r\sin(\theta+\varphi)|^\tau r^q|re^{i\theta}-1|^q}
+\int_0^{2\pi}\!\!\!\int_{\frac{1}{2}}^{R}\!\!\frac{r dr d\theta}{|\gamma+r\sin(\theta+\varphi)|^\tau r^q|re^{i\theta}-1|^q}.
\]
The second integral can be estimate as before. Hence, it is enough to estimate the first integral.
Since $r<1/2$ implies $|re^{i\theta}-1|\geq|r-1|\geq1/2$, we have
\[
\int_0^{2\pi}\int_\gamma^{\frac{1}{2}}\frac{r dr d\theta}{|\gamma+r\sin(\theta+\varphi)|^\tau r^q|re^{i\theta}-1|^q}
\leq2^q\int_0^{2\pi}\int_0^{\frac{1}{2}}\frac{dr d\theta}{|\gamma+r\sin(\theta+\varphi)|^\tau r^{q-1}},
\]
which can be estimated applying Lemma \ref{lemmaintegral}.

This completes the proof of the case $\gamma >0$ and of the Lemma.
\end{proof}

%%%%%%%%%%%%%%%%%%% end of some lemmas%%%%%%%%%%%%%%%%%%%%%%%

\section{An Integral Operator}

Let $L$ be a vector field defined in an open set $\tilde{\Omega} \subset \mathbb{R}^{2}$ and $\Omega$ be a
relatively compact subset of $\tilde{\Omega}$ ($\overline{\Omega} \subset \tilde{\Omega}$)
with $\partial\Omega$ piecewise of class $C^{1}$. Suppose that $L$ satisfies conditions (i), (ii), and (iii). Let
\begin{equation}
Z: \overline{\Omega} \to Z(\overline{\Omega}) \subset \mathbb{C}
\end{equation}
be a global first integral of $L$ of class $C^{1+\epsilon}$ as in Proposition 4.
As noted in section \ref{preliminaries}, $L$ is a multiple of the Hamiltonian of $Z$. In fact, we will assume that
\begin{equation}
L = Z_{x}\frac{\partial}{\partial y} - Z_{y}\frac{\partial}{\partial x}.
\end{equation}
For $f \in L^{1}(\Omega)$, define the integral operator
\begin{equation}\label{integral_operator}
T_{Z}f(x,y)={\frac{1}{2\pi i}}\int_{\Omega}\frac{f(\xi, \eta)}{Z(\xi, \eta)-Z(x,y)}d\xi d\eta.
\end{equation}
Since $L$ satisfies condition (iii), then for every $\Sigma_{j}$, connected component of $\Sigma$, there exists $\sigma_{j}>0$ such that (\ref{Z_normalized}) holds. Let
\begin{equation}\label{sigma}
\sigma= \max \sigma_{j}.
\end{equation}
We have the following theorem:

\begin{theorem}\label{continoperatz}
Let $\sigma$ be given by {\em(\ref{sigma})} and let $f \in L^{p}(\Omega)$, with $p>2+\sigma$. Then, there exists a constant $M(p,\sigma, \Omega)>0$ such that the operator $T_{Z}$ satisfies
\begin{equation*}
|T_{Z}f(x,y)| \leq M(p,\sigma,\Omega)||f||_{p}, \quad \forall \, (x,y) \in \Omega.
\end{equation*}
\end{theorem}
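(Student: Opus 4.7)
The natural starting point is H\"older's inequality with conjugate exponent $q = p/(p-1)$:
\[
|T_Z f(x,y)| \leq \frac{1}{2\pi}\|f\|_p \Bigl(\int_\Omega \frac{d\xi\, d\eta}{|Z(\xi,\eta)-Z(x,y)|^q}\Bigr)^{1/q},
\]
so the statement reduces to a uniform bound, in $(x,y)\in\Omega$, of the kernel integral $\int_\Omega |Z(\xi,\eta)-Z(x,y)|^{-q}\, d\xi\, d\eta$. A short calculation shows that $p > 2+\sigma$ is equivalent to $q < 2-\tau$, where $\tau = \sigma/(\sigma+1)$; this is precisely the regime in which Lemma \ref{lemmaintegral} applies.

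Cover $\overline{\Omega}$ by finitely many open charts. On any chart where $L$ is elliptic, $Z$ is a local $C^{1+\epsilon}$-diffeomorphism, hence $|Z(\xi,\eta)-Z(x,y)| \geq c|(\xi-x,\eta-y)|$, and the local contribution is dominated by $\int d\xi\,d\eta/|(\xi-x,\eta-y)|^q$, uniformly finite on bounded sets since $q < 2$. The remaining charts are neighborhoods of points $\mathsf{p}\in\Sigma$, to which Proposition \ref{normalization} applies.

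In such a chart, work on the half-piece $U^+$ (the case $U^-$ being identical) in the coordinates $(x,t)$ of Proposition \ref{normalization}. There $Z$ coincides, up to composition with a local biholomorphism, with $Z_{\sigma_\mathsf{p}}(x,t) = x + i\,t|t|^{\sigma_\mathsf{p}}/(\sigma_\mathsf{p}+1)$. Writing $(x_0,t_0)$ for the image of $(x,y)$ and setting $s = t|t|^{\sigma_\mathsf{p}}/(\sigma_\mathsf{p}+1)$, $s_0 = t_0|t_0|^{\sigma_\mathsf{p}}/(\sigma_\mathsf{p}+1)$, one has $dt = c_\mathsf{p}\,|s|^{-\tau_\mathsf{p}}\,ds$ with $\tau_\mathsf{p} = \sigma_\mathsf{p}/(\sigma_\mathsf{p}+1)\leq \tau$. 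Polar coordinates $(x-x_0, s-s_0)=(r\cos\theta,r\sin\theta)$ transform the local integral, up to multiplicative constants, into
\[
\int_0^{2\pi}\!\int_0^R \frac{dr\, d\theta}{|s_0 + r\sin\theta|^{\tau_\mathsf{p}}\, r^{q-1}},
\]
with $R$ controlled by the diameter of $\Omega$. Since $q < 2-\tau \leq 2-\tau_\mathsf{p}$, Lemma \ref{lemmaintegral} (applied with $\tau=\tau_\mathsf{p}$ and $\gamma=|s_0|$) bounds this by $M(q,\tau_\mathsf{p})\,R^{2-\tau_\mathsf{p}-q}$, a finite constant.

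The main obstacle will be keeping the estimates uniform in $(x,y)\in\Omega$ when $(x,y)$ and the bulk of the integration region lie in different charts. This is handled by compactness together with the fact that $Z$ is a global homeomorphism on $\overline{\Omega}$: $|Z(\xi,\eta)-Z(x,y)|$ is bounded below outside a fixed small neighborhood of $(x,y)$, so only the chart containing $(x,y)$ contributes a potentially singular term, treated as above, while the remainder contributes a bounded quantity. Summing the finitely many local bounds yields the desired constant $M(p,\sigma,\Omega)$.
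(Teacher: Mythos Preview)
Your proposal is correct and follows essentially the same route as the paper: H\"older's inequality to reduce to a uniform bound on $\int_\Omega |Z(\xi,\eta)-Z(x,y)|^{-q}\,d\xi\,d\eta$, localization via the normalization of Proposition~\ref{normalization} and the factorization $Z=H\circ Z_{\sigma_j}$ through a biholomorphism, the substitution $\eta = t|t|^{\sigma_j}/(1+\sigma_j)$ producing the weight $|\eta|^{-\tau_j}$, and then polar coordinates together with Lemma~\ref{lemmaintegral}. Your treatment is slightly more explicit than the paper's in separating the elliptic charts and in spelling out the cross-chart case (where $|Z(\xi,\eta)-Z(x,y)|$ is bounded below by the global homeomorphism property), but these are expository rather than mathematical differences.
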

\begin{proof}
Let $(x,y) \in \Omega$. Then
\begin{equation*}
|T_{Z}f(x,y)| \leq \frac{1}{2\pi}\int_{\Omega}\frac{|f(\xi, \eta)|}{|Z(\xi, \eta)-Z(x,y)|}d\xi d\eta \leq \frac{||f||_{p}}{2\pi}\Big(\int_{\Omega}\frac{d\xi d\eta}{|Z(\xi, \eta)-Z(x,y)|^{q}}\Big)^{\frac{1}{q}}
\end{equation*}
where $q= p/(p-1)$. Since $Z$ is a $C^{1+\epsilon}$-diffeomorphism outside of the characteristic set $\Sigma$, then it follows from the normalization
given in Proposition \ref{normalization} and use of partition of unity that in order to have
\[
\int_{\Omega}\frac{d\xi d\eta}{|Z(\xi, \eta)-Z(x,y)|^{q}} \leq M(p, \sigma, \Omega)
\]
it is enough to have the inequality when $\Omega$ is replaced by $\Omega \cap \mathcal{U}$, when $\mathcal{U}$ is a open set where the normalization holds.
For the purpose of estimating the integral, there is no loss of generality in assuming that the vector field can be transformed into the normal form
$L_\sigma$ in the open set $\mathcal{U}$ rather than in each connected component $\mathcal{U}^+$ and $\mathcal{U}^-$ of $\mathcal{U}\backslash \Sigma$ (since the diffeomorphisms extend across the boundary by Proposition \ref{normalization}).

Let $w \in \Sigma_{j} \subset \Sigma$. There exists a $C^{1}$-diffeomorphism
\[
\Phi:D(0,\delta) \to U_{w}=\Phi(D(0,\delta)) \subset \mathbb{R}^{2}
\]
with $\Phi(0)=w$, and a holomorphic function $H$ defined on $\overline{Z(U_{w})}$, with $H'(\zeta)\neq 0$ for $\zeta \in \overline{Z(U_{w})}$, such that
\[
Z\circ \Phi(s,t) = H(Z_{j}(s,t)),
\]
where $Z_{j}(s,t)=s+i\displaystyle\frac{t|t|^{\sigma_{j}}}{1+\sigma_{j}}$.
\\
Hence for $(x,y) \in U_{w}$
\[
\int_{\Omega\cap U_{w}}\frac{d\xi d\eta}{|Z(\xi, \eta)-Z(x,y)|^{q}} \leq \int_{D(0,\delta)}\frac{|D\Phi(s,t)|ds dt}{|H(Z_{j}(s, t))-H(Z_{j}(s^{\circ},t^{\circ}))|^{q}}
\]
\[
\leq \frac{\max|D\Phi(s,t)|}{\min|H'(\zeta)|^{q}}\int_{D(0,\delta)}\frac{ds dt}{|Z_{j}(s, t)-Z_{j}(s^{\circ},t^{\circ})|^{q}}.
\]
By using the change of variables $\xi = s$ and $\eta=\displaystyle\frac{t|t|^{\sigma_{j}}}{1+\sigma_{j}}$ we obtain
\begin{align*}
(1+\sigma_j)^{\tau_j}\int_{D(0,\delta)} \frac{ ds dt }{|Z_j(s,t)-Z_j(s^\circ,t^\circ)|^{q}}
 & =\int_{Z_j(D(0,\delta))}\frac{d\xi d\eta}{|\eta|^{\tau}|\zeta-z|^{q}}\\
 & \leq\int_{D(z,\textsf{d})}\frac{d\xi d\eta}{|\eta|^{\tau}|\zeta-z|^{q}},
\end{align*}
where $\zeta = \xi +i\eta$, $z=Z_{j}(s^{\circ}, t^{\circ})$, and $\textsf{d}=\textrm{diam}(Z_j(D(0,\delta)))$.

Now, the use of polar coordinates $(r,\theta)$ given by
$\xi =r{\rm cos}\theta + \Re(z)$ and $\eta = r\sin\theta + \Im(z)$ and of Lemma \ref{lemma_holder} give
\[
\int_{D(z,\textsf{d})}\frac{d\xi d\eta}{|\eta|^{\tau}|\zeta-z|^{q}}=\int_{0}^{2\pi}\int_{0}^{\textsf{d}}\frac{drd\theta}{|r\sin\theta+\Im(z)|^{\tau}r^{q-1}}
\leq M(q,\tau)\,\textsf{d}^{2-\tau-q}.
\]
\end{proof}

%%%%%%%%%%%%%%% end of theorem %%%%%%%%%%%%%%%%

\begin{proposition}\label{1q2menoslambdaquandofestaeml1}
Let $\sigma$ be given by {\em(\ref{sigma})} and let  $f \in L^{1}(\Omega)$. Then, $Tf \in L^{q}(\Omega)$, for any $1\leq q <  2 -\sigma/(\sigma+1)$.
\end{proposition}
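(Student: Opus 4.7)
The plan is to bound $\|T_Zf\|_{L^q(\Omega)}$ by Minkowski's inequality for integrals and then reduce to a kernel estimate that is essentially identical to the one already carried out in the proof of Theorem \ref{continoperatz}. Note first that the condition $q<2-\sigma/(\sigma+1)=(\sigma+2)/(\sigma+1)$ is precisely the range of conjugate exponents $q=p/(p-1)$ with $p>2+\sigma$, so the same critical threshold governs both results.

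First I would apply Minkowski's integral inequality to write
\[
\|T_Zf\|_{L^q(\Omega)} \;\leq\; \frac{1}{2\pi}\int_\Omega |f(\xi,\eta)|\left(\int_\Omega \frac{dx\,dy}{|Z(\xi,\eta)-Z(x,y)|^q}\right)^{1/q} d\xi\,d\eta.
\]
It therefore suffices to establish the uniform kernel bound
\[
\Lambda \;:=\; \sup_{(\xi,\eta)\in\Omega}\int_\Omega \frac{dx\,dy}{|Z(\xi,\eta)-Z(x,y)|^q} \;<\; \infty,
\]
for then $\|T_Zf\|_{L^q(\Omega)}\leq (2\pi)^{-1}\Lambda^{1/q}\|f\|_{L^1(\Omega)}$.

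To bound $\Lambda$, I would mimic the localization used in the proof of Theorem \ref{continoperatz}. Cover $\overline\Omega$ by finitely many open sets with a subordinate partition of unity. On any set avoiding $\Sigma$ the map $Z$ is a $C^{1+\epsilon}$-diffeomorphism, so in $Z$-coordinates the integrand is dominated by $|\zeta-z|^{-q}$, which is integrable over a bounded planar region since $q<2$. On a neighborhood $\mathcal U$ meeting a connected component $\Sigma_j$ of $\Sigma$, Proposition \ref{normalization} together with a conformal straightening $H$ reduces $Z$ to $Z_{\sigma_j}(s,t)=s+it|t|^{\sigma_j}/(\sigma_j+1)$; after the change of variables $\xi=s$, $\eta=t|t|^{\sigma_j}/(\sigma_j+1)$ the local integral is controlled, up to a constant depending on $\|D\Phi\|_\infty$ and $\min|H'|$, by
\[
\int_{D(z,\mathsf d)}\frac{d\xi\,d\eta}{|\eta|^{\tau_j}\,|\zeta-z|^{q}},\qquad \tau_j=\frac{\sigma_j}{\sigma_j+1}.
\]

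Finally, passing to polar coordinates centered at $z=Z_{\sigma_j}(s^\circ,t^\circ)$ and invoking Lemma \ref{lemmaintegral} (with $\gamma=|\Im z|\geq 0$) yields a bound of order $\mathsf d^{\,2-\tau_j-q}$ that is independent of $z$, as long as $q<2-\tau_j$. Since $\tau_j\leq \tau$, the hypothesis $q<2-\tau$ guarantees the required inequality for every $j$, and summing the finitely many local contributions gives $\Lambda<\infty$. I do not expect any serious obstacle: the main observation is merely that the roles of $(x,y)$ and $(\xi,\eta)$ in the Cauchy-type kernel are interchangeable, so the integral to be controlled has exactly the form already estimated in Theorem \ref{continoperatz}; what is new is the application of Minkowski's inequality that converts this kernel bound into an $L^1\to L^q$ mapping property for $T_Z$.
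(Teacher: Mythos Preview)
Your argument is correct, and the kernel estimate you invoke is indeed exactly the one established in Theorem~\ref{continoperatz} (the symmetry of $|Z(\xi,\eta)-Z(x,y)|$ makes the swap of variables harmless). The route, however, differs from the paper's. The paper does not appeal to Minkowski's integral inequality; instead it argues by duality: for an arbitrary $g\in L^p(\Omega)$ with $p>2+\sigma$ it observes that $g_1(x,y)=\int_\Omega |g|\,|Z(\cdot)-Z(x,y)|^{-1}$ is bounded (this is Theorem~\ref{continoperatz} again), applies Fubini to rewrite $\int_\Omega |f|\,g_1$ as $\int_\Omega |g|\,f_1$ with $f_1(\xi,\eta)=\int_\Omega |f|\,|Z(\cdot)-Z(\xi,\eta)|^{-1}$, and then invokes the converse of H\"older's inequality to conclude $f_1\in L^q$; finally $|T_Zf|\le f_1$ finishes the proof.

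Both arguments hinge on the same uniform bound $\sup_{z}\int_\Omega |Z-z|^{-q}<\infty$ for $q<2-\tau$. Your Minkowski approach is more direct and yields the explicit operator bound $\|T_Zf\|_q\le (2\pi)^{-1}\Lambda^{1/q}\|f\|_1$ in one line, without needing the somewhat less familiar converse H\"older inequality. The paper's duality proof, on the other hand, establishes the slightly stronger pointwise majorant $f_1\in L^q$ and avoids citing Minkowski, at the cost of an extra Fubini step and an outside reference. Either is perfectly acceptable here.
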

\begin{proof}
Let $f \in L^{1}(\Omega)$ and let $g \in L^{p}(\Omega)$, with $p>2+\sigma$. It follows from Lemma \ref{lemmaintegral} that the function
\[
g_{1}(x,y)=\displaystyle\int_{\Omega}|g(\xi,\eta)|\frac{d\xi d\eta}{|Z(\xi,\eta)-Z(x,y)|}
\]
is bounded. Hence, $fg_{1}\in L^{1}(\Omega)$.
By applying Fubini's Theorem we obtain
\[
\int_{\Omega}|f(x,y)|g_{1}(x,y)dxdy=\int_{\Omega}|f(x,y)|\left(\int_{\Omega}|g(\xi,\eta)|\frac{d\xi d \eta}{|Z(\xi,\eta)-Z(x,y)|}\right)dxdy
\]
\[
\quad\quad\quad\quad\quad\quad\quad\quad\quad\quad\,
=\int_{\Omega}|g(\xi,\eta)|\left(\int_{\Omega}|f(x,y)|\frac{dxdy}{|Z(\xi,\eta)-Z(x,y)|}\right)d\xi d\eta
\]
\[
=\int_{\Omega}|g(\xi,\eta)|f_{1}(\xi , \eta)d\xi d\eta,
\,\,\,
\]
where
\[
f_{1}(\xi,\eta)=\displaystyle\int_{\Omega}|f(x,y)|\frac{dxdy}{|Z(x,y)-Z(\xi,\eta)|}.
\]
Hence $|g|f_{1} \in L^{1}(\Omega)$.
Since $g\in L^{p}(\Omega)$ is arbitrary, it follows from the converse of the H\"{o}lder inequality (see, for instance, \cite{Yeh}; also, \cite{Leach}) that $f_{1} \in L^{q}(\Omega)$,
for $q=p/(p-1)$. Note that $p>2+\sigma$ implies
$1<q <2-\sigma/(\sigma+1)$. Also, note that $|Tf|\leq f_1$. Therefore, $Tf\in L^{q}(\Omega)$, for any $1<q <2-\sigma/(\sigma+1)$.
\end{proof}

The following Lemma is a direct consequence of Green's Theorem.

\begin{lemma}
Let $w\in C(\overline{\Omega})\cap C^{1}({\Omega})$. Then
\begin{equation}\label{lgauss}
\int_{\Omega}Lw \,dxdy=-\int_{\partial \Omega}w\,dZ(x,y).
\end{equation}
\end{lemma}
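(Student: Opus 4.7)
The plan is to realize the $2$-form $Lw\,dx\wedge dy$ as an exterior derivative and then apply Stokes'/Green's theorem to reduce to a boundary integral. The natural candidate for the primitive is the $1$-form
\[
\omega \;=\; w\, dZ \;=\; wZ_x\, dx + wZ_y\, dy,
\]
which is continuous on $\overline{\Omega}$ and $C^1$ in $\Omega$ by the hypotheses on $w$ and the $C^{1+\epsilon}$ regularity of $Z$.

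Computing the exterior derivative, and using that $d(dZ)=0$ since $Z$ is $C^1$, one obtains
\[
d\omega \;=\; dw\wedge dZ \;=\; (w_x Z_y - w_y Z_x)\,dx\wedge dy \;=\; -Lw\, dx\wedge dy,
\]
where the last equality is just the definition $Lw = Z_x w_y - Z_y w_x$ recalled after (\ref{integral_operator}). Applying the classical Stokes' theorem to $\omega$ on $\Omega$, which is legitimate because $\partial\Omega$ is piecewise $C^1$ and $\omega$ extends continuously to $\overline\Omega$, yields
\[
\int_{\partial\Omega} w\, dZ \;=\; \int_\Omega d\omega \;=\; -\int_\Omega Lw\, dx\, dy,
\]
which rearranges to the identity (\ref{lgauss}).

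The only delicate point is the mild boundary regularity of $w$: the hypothesis gives $C^1$ only in the open set $\Omega$ together with continuity on $\overline\Omega$, so a direct appeal to Green's theorem is not quite immediate. I would address this by exhausting $\Omega$ by subdomains $\Omega_n\subset\subset\Omega$ with $C^1$ boundaries converging to $\Omega$, applying the classical Green's theorem to $\omega$ on each $\Omega_n$, and then passing to the limit as $n\to\infty$. Uniform continuity of $w$ and $Z$ on $\overline\Omega$ together with the piecewise $C^1$ nature of $\partial\Omega$ guarantee convergence of both the area integral of $Lw$ and the line integral of $w\, dZ$. This approximation step is really the entire substance of the phrase ``direct consequence of Green's theorem'' used in the statement.
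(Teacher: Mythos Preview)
Your argument is correct and is precisely the Green's theorem computation the paper has in mind; the paper gives no proof beyond the phrase ``direct consequence of Green's Theorem,'' so your version---including the exhaustion argument for the boundary regularity of $w$---is in fact more detailed than the original. One small remark: since $Z$ is only $C^{1+\epsilon}$, the coefficients of $dZ$ are merely H\"older continuous, so the step $d(dZ)=0$ is not classical and should be read distributionally (or justified by the same smoothing/approximation device you already invoke at the boundary).
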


\begin{proposition}\label{reprefhhfhfhfhfh2}
Let $w \in C(\overline{\Omega})\cap C^{1}(\Omega)$. Then, for all $(x,y) \in\Omega$, we have
\begin{equation*}%\label{fefff2}
w(x,y) =\frac{1}{2 \pi i}\int_{\partial \Omega}\frac{w(\alpha,\beta)}{Z(\alpha,\beta)-Z(x,y)}dZ(\alpha,\beta)
+\frac{1}{2 \pi i}\int_{\Omega}\frac{Lw(\alpha,\beta)}{Z(\alpha,\beta)-Z(x,y)} \ d\alpha d\beta.
\end{equation*}
\end{proposition}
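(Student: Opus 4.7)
The approach is a Cauchy--Pompeiu puncture argument adapted to the vector field $L$. The key observation is that $LZ=0$ implies
$$L\!\left(\frac{1}{Z(\alpha,\beta)-Z(x,y)}\right)=0$$
away from $(\alpha,\beta)=(x,y)$, so the ``Cauchy kernel'' is $L$-holomorphic off the diagonal. Fix $(x,y)\in\Omega$ and set
$$v(\alpha,\beta)=\frac{w(\alpha,\beta)}{Z(\alpha,\beta)-Z(x,y)}.$$
The Leibniz rule then gives $Lv=Lw/(Z-Z(x,y))$ at every point where $Z(\alpha,\beta)\ne Z(x,y)$.

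For small $r>0$, I would set $D_r=Z^{-1}(B_r(Z(x,y)))\subset\Omega$, which is a well-defined open neighborhood of $(x,y)$ because $Z:\overline\Omega\to Z(\overline\Omega)$ is a homeomorphism by Proposition \ref{first_integral}. Put $\Omega_r=\Omega\setminus\overline{D_r}$. On $\overline{\Omega_r}$, the function $v$ is continuous and of class $C^1$ in the interior, so the preceding Gauss-type lemma applies on $\Omega_r$ and gives
$$\int_{\Omega_r}\frac{Lw(\alpha,\beta)}{Z(\alpha,\beta)-Z(x,y)}\,d\alpha d\beta = -\int_{\partial\Omega}v\,dZ+\int_{\partial D_r}v\,dZ,$$
where $\partial D_r$ is taken with its positive orientation (it contributes with a $+$ sign after reversing the inward orientation it carries as part of $\partial\Omega_r$).

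Next, I parametrize $\partial D_r$ by $Z(\alpha(\theta),\beta(\theta))=Z(x,y)+re^{i\theta}$, $\theta\in[0,2\pi]$, to obtain
$$\int_{\partial D_r}\frac{w}{Z-Z(x,y)}\,dZ = i\int_0^{2\pi}w(\alpha(\theta),\beta(\theta))\,d\theta \longrightarrow 2\pi i\,w(x,y)$$
as $r\to 0^+$, by uniform continuity of $w$ and of $Z^{-1}$. For the interior integral, integrability of $1/|Z-Z(x,y)|$ on $\Omega$ follows from Lemma \ref{lemmaintegral} applied with $q=1<2-\tau$ (this is the same estimate used in the proof of Theorem \ref{continoperatz}); combined with the local boundedness of $Lw$ on $\Omega$, dominated convergence yields the limit $\int_\Omega Lw/(Z-Z(x,y))\,d\alpha d\beta$. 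Passing to the limit $r\to 0^+$ in the Gauss-type identity and rearranging then produces the claimed representation.

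The main technical obstacle is verifying the hypotheses of the Gauss-type lemma on $\Omega_r$, specifically that $\partial D_r$ is piecewise $C^1$ so that the boundary integral is meaningful. This is immediate when $(x,y)\notin\Sigma$, where $Z$ is a local $C^{1+\epsilon}$-diffeomorphism. When $(x,y)\in\Sigma$, where $Z$ fails to be a diffeomorphism, one passes to the normal coordinates of Proposition \ref{normalization}: in those coordinates the preimage of $\{|\zeta-Z(x,y)|=r\}$ under $Z_\sigma(x,t)=x+it|t|^\sigma/(\sigma+1)$ is explicit and piecewise smooth, and the parametrization argument used in the residue computation goes through unchanged. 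Once this geometric point is settled, the rest of the proof is essentially formal.
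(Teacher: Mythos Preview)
Your argument is essentially identical to the paper's: puncture $\Omega$ by the $Z$-preimage of a small disc about $Z(x,y)$, apply the Gauss-type lemma on the punctured region, compute the inner boundary contribution via the parametrization $Z=Z(x,y)+re^{i\theta}$, and pass to the limit using integrability of $1/|Z-Z(x,y)|$. One small slip: Lemma~\ref{lemmaintegral} as stated requires $1<q<2-\tau$, so you cannot literally apply it with $q=1$; the paper instead invokes the bound from the proof of Theorem~\ref{continoperatz} (which gives $1/(Z-Z(x,y))\in L^q(\Omega)\subset L^1(\Omega)$ for some $q>1$), and you should do the same.
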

\begin{proof}
Let $(x_{0},y_{0}) \in \Omega$ fixed.
Set $z_{0}= Z(x_{0},y_{0})$ and let $\epsilon >0$ such that $\overline{D_{\epsilon}} \subset Z(\Omega)$, where $D_{\epsilon}=D(z_{0}, \epsilon)$.
Define $K_{\epsilon}=Z^{-1}(D_{\epsilon})$ and $\Omega_{\epsilon}=\Omega \setminus K_{\epsilon}$.
Let
\[
f(x,y)=\frac{w(x,y)}{Z(x,y)-z_{0}}.
\]
We have $f \in C(\overline{\Omega_{\epsilon}}) \cap C^{1}(\Omega_{\epsilon})$. Hence, by \eqref{lgauss},
\begin{equation}\label{eqysysys1}\begin{split}
\int_{\Omega_{\epsilon}}\frac{Lw(x,y)}{Z(x,y)-z_{0}} \ dxdy
& =-\int_{\partial \Omega_{\epsilon}}\frac{w(x,y)}{Z(x,y)-z_{0}}dZ(x,y)\\
&
=-\int_{\partial \Omega}\frac{w(x,y)}{Z(x,y)-z_{0}}dZ(x,y) + \int_{\partial K_{\epsilon}}\frac{w(x,y)}{Z(x,y)-z_{0}}dZ(x,y);
\end{split}\end{equation}
Since
\[
\int_{\partial K_{\epsilon}}f(x,y)dZ(x,y) =\int_{Z(\partial K_{\epsilon})}(f\circ Z^{-1})(\zeta)d\zeta = \displaystyle\int_{\partial B_{\epsilon}}\frac{\tilde{w}(\zeta)}{\zeta -z_{0}}d\zeta,
\]
where $\tilde{w}= w\circ Z^{-1}$,
then by using polar coordinates $\zeta = z_{0} + \epsilon e^{i\theta}$, $\theta \in [0,2\pi]$, we obtain
\[
\int_{\partial D_{\epsilon}(z_{0})}\frac{\tilde{w}(\zeta)}{\zeta - z_{0}}d \zeta =\int_{0}^{2\pi} \frac{\tilde{w}(z_{0}
+\epsilon e^{i\theta})}{\epsilon e^{i\theta}}i\epsilon e^{i\theta}d\theta \to 2\pi i \tilde{w}(z_{0}), \ \  \mbox{as} \ \ \epsilon \to 0.
\]
Therefore,
\begin{equation}\label{eqysysys2}
\int_{\partial K_{\epsilon}}\frac{w(x,y)}{Z(x,y)-z_{0}}dZ(x,y) \to 2\pi i w(x_{0},y_{0}) ,\ \  \mbox{as} \ \ \epsilon \to 0.
\end{equation}

On the other hand, as done in the proof of Theorem \ref{continoperatz},
\[
(x,y) \mapsto\frac{1}{Z(x,y) - z_{0}} \in L^{q}(\Omega)\subset L^{1}(\Omega).
\]
Hence,
\begin{equation}\label{eqysysys3}
\int_{\Omega_{\epsilon}}\frac{Lw(x,y)}{Z(x,y)-z_{0}} \ dxdy \to \int_{\Omega}\frac{Lw(x,y)}{Z(x,y)-z_{0}} \ dxdy, \quad as \quad \epsilon \to 0.
\end{equation}
The Proposition follows
from \eqref{eqysysys1}, \eqref{eqysysys2} and \eqref{eqysysys3}.
\end{proof}

%%%%%%%%%%%%%%%%% end of proposition %%%%%%%%%%%%

\begin{theorem}\label{tsolucao}
If $f \in L^1(\Omega)$ then $T_Zf$ satisfies $L(T_Zf)=f$ in $\Omega$.
\end{theorem}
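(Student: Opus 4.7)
Since for general $f\in L^1(\Omega)$ the function $T_Z f$ is only known to lie in $L^q(\Omega)$ with $1\le q<2-\tau$ (Proposition \ref{1q2menoslambdaquandofestaeml1}), the identity $L(T_Z f)=f$ must be interpreted in the sense of distributions. My strategy is to exploit the fact that the vector field $L=Z_x\partial_y-Z_y\partial_x$ is divergence-free: the coefficients $a=-Z_y$, $b=Z_x$ satisfy $a_x+b_y=-Z_{yx}+Z_{xy}=0$ since $Z\in C^{1+\epsilon}$. Consequently the formal transpose is $L^t=-L$, and to prove $L(T_Z f)=f$ in $\mathcal{D}'(\Omega)$ it suffices to show
\[
\int_\Omega T_Z f(x,y)\,L\phi(x,y)\,dx\,dy \;=\; -\int_\Omega f(x,y)\,\phi(x,y)\,dx\,dy, \qquad \forall\phi\in C_c^\infty(\Omega).
\]

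After substituting the definition of $T_Zf$ on the left-hand side, I would switch the order of integration by Fubini. This is justified by Tonelli together with the uniform kernel bound
\[
\sup_{(\xi,\eta)\in\Omega}\int_\Omega \frac{dx\,dy}{|Z(\xi,\eta)-Z(x,y)|}\;\le\; C,
\]
which follows from the same reduction to the normal form $Z_\sigma$ used in Theorem \ref{continoperatz} and in the proof of Proposition \ref{1q2menoslambdaquandofestaeml1}, applying Lemma \ref{lemmaintegral} with exponent $q=1<2-\tau$. Since $L\phi$ is bounded with compact support in $\Omega$ and $f\in L^1(\Omega)$, the triple integrand is absolutely integrable, and the left-hand side becomes
\[
\frac{1}{2\pi i}\int_\Omega f(\xi,\eta)\left(\int_\Omega \frac{L\phi(x,y)}{Z(\xi,\eta)-Z(x,y)}\,dx\,dy\right)d\xi\,d\eta.
\]

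The inner integral is then evaluated by Proposition \ref{reprefhhfhfhfhfh2} applied to the test function $\phi$. Since $\phi$ vanishes in a neighborhood of $\partial\Omega$, the boundary term drops out and the representation collapses to
\[
\phi(\xi,\eta)\;=\;\frac{1}{2\pi i}\int_\Omega \frac{L\phi(x,y)}{Z(x,y)-Z(\xi,\eta)}\,dx\,dy,
\]
so the bracketed inner integral equals $-2\pi i\,\phi(\xi,\eta)$. Substituting back recovers exactly $-\int_\Omega f\phi\,d\xi\,d\eta$, which is the desired distributional identity.

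The main obstacle I anticipate is purely technical, namely justifying Fubini uniformly for $f$ merely in $L^1$; this is handled cleanly by the uniform $L^1$ bound on the Cauchy-type kernel that is already furnished by Lemma \ref{lemmaintegral} together with the normalization from Proposition \ref{normalization}. Beyond that bookkeeping, the proof is essentially a duality version of the Cauchy--Pompeiu reproducing formula already established in Proposition \ref{reprefhhfhfhfhfh2}.
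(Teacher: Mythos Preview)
Your proposal is correct and follows essentially the same route as the paper: pair $T_Zf$ against $L\phi$ for a test function $\phi$, swap integrals via Fubini, and evaluate the inner integral using the Cauchy--Pompeiu formula of Proposition \ref{reprefhhfhfhfhfh2} with vanishing boundary term. The only differences are cosmetic---you make explicit the divergence-free structure giving $L^t=-L$ and the Tonelli justification for Fubini via the uniform $L^1$ kernel bound, both of which the paper leaves implicit.
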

\begin{proof}
Let $f \in L^1(\Omega)$. By Proposition \ref{1q2menoslambdaquandofestaeml1}, $T_Zf\in L^q(\Omega)$, $1<q<2-\sigma/(\sigma+1)$. Hence, by applying Proposition
\ref{reprefhhfhfhfhfh2} we have, for $\phi\in C^\infty_0(\Omega)$
\begin{align*}
\langle L(T_Zf), \phi \rangle & =-\int_{\Omega}T_Zf(x,y)L\phi(x,y)dxdy\\ &
=-\frac{1}{2 \pi i} \int_{\Omega} \left(\int_{\Omega}\frac{f(\xi,\eta)}{Z(\xi,\eta)-Z(x,y)} \ d\xi d\eta\right)L\phi(x,y)dxdy
\\ &
=\int_{\Omega} f(\xi,\eta)\left(-\frac{1}{2 \pi i} \int_{\Omega}\frac{L\phi(x,y)}{Z(x,y)-Z(\xi,\eta)} \ dxdy \right)d\xi d\eta
\\ &
=\int_{\Omega} f(\xi,\eta)\phi(\xi,\eta)d\xi d\eta=\langle f,\phi\rangle\, .
\end{align*}
Therefore, $L(T_Zf)=f$ in $\Omega$.
\end{proof}

\begin{example} For $p\le 2+\sigma$, there exist $f\in L^p(\Omega )$ such that equation $Lu=f$ has no bounded solutions.
 Indeed, for the standard vector field $L=\partial_t-i|t|^\sigma \partial_x$,
 the function $v:\overline{\Omega}\rightarrow\mathbb{C}$ defined by $v(x,t)=\ln|\ln|Z(x,t)||$ is not bounded but solves
 $Lv=f$ with
\[
f(x,t)=\frac{-i|t|^\sigma}{\overline{Z(x,t)}\,\ln|Z(x,t)|}\in L^p(\Omega), \quad \textrm{for\ any}\quad  1\le p\le 2+\sigma.
\]
That $f\in L^p$ with $p\le 2+\sigma$ follows from the fact that $\displaystyle \int_0^a \frac{dr}{r^s|\ln r|^p}\, <\, \infty$ if and only if $s\le 1$.
For general vector fields, similar examples can be produced thanks to the normalization near points on $\Sigma$.
\end{example}

\begin{remark}
Cauchy type integral operators were used in {\cite{Mez-Mem}} and {\cite{Mez-CV08}} in connection
with other types of vector fields.
\end{remark}

%%%%%%%%%%%%%%% holder section %%%%%%%%%%%%%%%%%%

\section{H\"{o}lder continuity of solutions}

In this section we prove that the solutions of $Lu=f$ are H\"older continuous if $f\in L^p$, with $p>2+\sigma$.
Let $\Sigma_1,\cdots,\Sigma_N$ be the connected components of $\Sigma$ and $\sigma_1,\cdots,\sigma_N$ be respectively the types of $L$ along
$\Sigma_1,\cdots,\Sigma_N$. Recall that
$\sigma=\displaystyle\max_{1\leq j\leq N}\{\sigma_j\}$, $ \tau=\displaystyle\frac{\sigma}{\sigma+1}$, and
for $p>2+\sigma$ and $q=\displaystyle\frac{p}{p-1}$ we have $q<2-\tau$.

\begin{theorem}\label{holder_continuity}
Let $f\in L^p(\Omega)$, with $p>2+\sigma$. Let $q$ and $\tau$ be as given above.
If $u$ satisfies $Lu=f$  in $\Omega$, then $u\in C^\alpha (\Omega )$ with
$\alpha = \displaystyle\frac{2-q-\tau}{q}$.
\end{theorem}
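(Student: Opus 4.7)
The plan is to prove this in two stages: first establish that the particular solution $T_Z f$ from Theorem \ref{tsolucao} is itself in $C^\alpha(\Omega)$, and then use hypocomplexity to promote this to any solution $u$. The second stage is immediate from the Remark following Proposition \ref{first_integral}: the difference $v = u - T_Z f$ satisfies $Lv = 0$, hence is of the form $h\circ Z$ with $h$ holomorphic on $Z(\Omega)$. Since $Z$ is of class $C^{1+\epsilon}$, $h\circ Z \in C^{1+\epsilon}(\Omega) \subset C^\alpha(\Omega)$, so that $u = T_Z f + h\circ Z$ inherits the H\"older regularity of $T_Zf$. All the work is thus in showing $T_Z f \in C^\alpha(\Omega)$.

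For two points $\mathsf{p}_1,\mathsf{p}_2 \in \Omega$ with $z_j = Z(\mathsf{p}_j)$, I would write the standard difference identity
\[
T_Z f(\mathsf{p}_1) - T_Z f(\mathsf{p}_2) = \frac{z_1-z_2}{2\pi i}\int_\Omega \frac{f(\xi,\eta)\,d\xi d\eta}{(Z(\xi,\eta)-z_1)(Z(\xi,\eta)-z_2)}
\]
and apply H\"older's inequality with exponents $p$ and $q=p/(p-1)$, obtaining
\[
|T_Z f(\mathsf{p}_1) - T_Z f(\mathsf{p}_2)| \leq \frac{|z_1-z_2|}{2\pi}\|f\|_p\, I(z_1,z_2)^{1/q},
\]
where $I(z_1,z_2)=\int_\Omega |Z(\xi,\eta)-z_1|^{-q}|Z(\xi,\eta)-z_2|^{-q}\,d\xi d\eta$. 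The target is the estimate $I(z_1,z_2)\leq C\,|z_1-z_2|^{2-2q-\tau}$; combining it with the factor $|z_1-z_2|$ in front gives $|z_1-z_2|$ to the power $1+(2-2q-\tau)/q = (2-q-\tau)/q = \alpha$. Since $Z$ is Lipschitz, $|z_1-z_2|\leq C'|\mathsf{p}_1-\mathsf{p}_2|$ then yields the H\"older bound in the original coordinates.

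The core of the argument, and the main obstacle, is this integral estimate. Exactly as in the proof of Theorem \ref{continoperatz}, one uses a partition of unity adapted to the normalization of Proposition \ref{normalization}: off $\Sigma$ the map $Z$ is a $C^{1+\epsilon}$ diffeomorphism and the bound is elementary, so all the difficulty is local near a point $w \in \Sigma_j$. There the change of variables $\zeta = \xi + i\eta$ with $\eta = t|t|^{\sigma_j}/(1+\sigma_j)$ introduces the Jacobian factor $|\eta|^{-\tau_j}$ (with $\tau_j = \sigma_j/(\sigma_j+1) \leq \tau$) and reduces the local contribution to
\[
\int_{D(z_1,\mathsf{d})}\frac{d\xi d\eta}{|\eta|^\tau\,|\zeta-z_1|^q|\zeta-z_2|^q}.
\]
Setting $\delta=|z_1-z_2|$, writing $z_2 - z_1 = \delta e^{i\psi}$, and using polar coordinates $\zeta - z_1 = \delta\rho e^{i(\theta'+\psi)}$ converts $|\zeta-z_1|=\delta\rho$, $|\zeta-z_2|=\delta|\rho e^{i\theta'}-1|$, and
\[
|\eta|^\tau = |\Im z_1 + \delta\rho\sin(\theta'+\psi)|^\tau = \delta^\tau\bigl|\tfrac{\Im z_1}{\delta}+\rho\sin(\theta'+\psi)\bigr|^\tau.
\]
Collecting powers of $\delta$ pulls out the factor $\delta^{2-2q-\tau}$, and what remains is exactly the integral of Lemma \ref{lemma_holder} with parameters $\gamma = |\Im z_1|/\delta\geq 0$ and $\varphi = \psi$ (reduced to $[0,\pi]$ by a reflection if needed). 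Lemma \ref{lemma_holder} provides the bound $C(q,\tau)$ uniformly in $\gamma$ and in $R = \mathsf{d}/\delta$, giving $I(z_1,z_2)\leq C\,\delta^{2-2q-\tau}$ as required and completing the proof.
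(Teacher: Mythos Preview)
Your proposal is correct and follows essentially the same route as the paper: reduce to showing $T_Zf\in C^\alpha$, write the difference identity, apply H\"older, localize via partition of unity and the normalization of Proposition~\ref{normalization}, perform the Jacobian change of variables that produces the weight $|\eta|^{-\tau}$, and then rescale and pass to polar coordinates so that Lemma~\ref{lemma_holder} applies to give the decisive bound $|z_1-z_2|^{2-2q-\tau}$. The only cosmetic differences are that the paper tracks the local biholomorphisms $H_{jk}$ explicitly (so the relevant points are $w_\ell=H_{jk}^{-1}(z_\ell)$ rather than $z_\ell$ themselves) and states the key estimate in the form $J\le C_1|z_1-z_0|^{2-2q-\tau}+C_2$, which is equivalent to yours since the exponent is negative on a bounded domain.
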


\begin{proof} Since $L$ is hypocomplex, it follows from Theorem \ref{tsolucao} that if $u$ solves $Lu=f$ in $\Omega$,
then $u=T_Zf +H(Z)$, where $T_Z$ is the integral operator defined in section 4 and $H$ is a holomorphic function
defined on $Z(\Omega)$. Thus to prove the Theorem, we need only prove that
$T_Zf\in C^\alpha (\ov{\Omega})$.

Let $f\in L^p(\Omega)$, with $p>2+\sigma$, and let $(x_0,y_0), (x_1,y_1)\in\overline{\Omega}$.
Set $z_1=Z(x_1,y_1)$ and $z_0=Z(x_0,y_0)$. Then
\begin{align*}
|T_Zf(x_1,y_1)-T_Zf(x_0,y_0)| &
\leq \frac{|z_1-z_0|}{2\pi}\int_\Omega \frac{|f(\alpha,\beta)|d\alpha d\beta}{|Z(\alpha,\beta)-z_1||Z(\alpha,\beta)-z_0|}\\
& \leq \frac{|z_1-z_0|}{2\pi} ||f||_p J^{\frac{1}{q}},
\end{align*}
where
\begin{equation}\label{J}
J=\int_\Omega \frac{d\alpha d\beta}{|Z(\alpha,\beta)-z_1|^q|Z(\alpha,\beta)-z_0|^q}.
\end{equation}
To prove the Theorem, it is enough to show that
\begin{equation}\label{final_inequality}
J\leq C_1|z_1-z_0|^{2-2q-\tau}+C_2,
\end{equation}
for some constants $C_1,C_2>0$.

For each $j=1,\cdots,N$, let $V_j$ be a tubular neighborhood of $\Sigma_j$ such that $V_j\cap V_k =\emptyset$ for $j\neq k$
and such that
\[
V_j=\bigcup_{k=1}^{M_j}V_{jk}, \quad j=1,\cdots,N,
\]
where each $V_{jk}$ is an open subset where $L$ can be transformed into the standard vector field $L_{\sigma_j}$, with first integral
\[
Z_j(s,t)=s+i\frac{t|t|^{\sigma_j}}{1+\sigma_j}
\]
on each side $V_{jk}^\pm$ of the characteristic curve $\Sigma\cap V_{jk}$ (see Proposition \ref{normalization}).
Hence, we can assume that there exists a diffeomorphisms of class $C^1$
\[
\Phi^\pm_{jk}:D^\pm(0,R)\rightarrow V^\pm_{jk}
\]
such that $Z\circ\Phi^\pm_{jk}=H^\pm_{jk}\circ Z_j$, where $H^\pm_{jk}$ is a holomorphic function defined in $Z_{j}(\overline{D^\pm(0,R)})$
and has a $C^1$ extension up to the boundary. Moreover, $H'_{jk}(\zeta)\geq C_{jk}>0$ on $\ov{D^\pm(0,R)}$.

Since $L$ is elliptic outside $\Sigma$, then we can use partition of unity to reduce the problem of proving (\ref{final_inequality})
into that of proving the inequality  when
$\Omega$ is replaced by $V^\pm_{jk}$, with $(x_0,y_0),(x_1,y_1)\in V^\pm_{jk}$. In fact for the estimation of the integral, there is no
loss of generality in assuming that $L$ can be transformed into the standard vector in $V_{jk}$ and not only in each $V_{jk}^\pm$ separately.

Set $\Phi_{jk}(s_\ell,t_\ell)=(x_\ell,y_\ell)$, $\ell=0,1$. We have
\[
J_{jk}=\int_{V_{jk}} \frac{d\alpha d\beta}{|Z(\alpha,\beta)-Z(x_1,y_1)|^q|Z(\alpha,\beta)-Z(x_0,y_0)|^q}
\]
\[
=\int_{D(0,R)}\frac{|\det D\Phi_{jk}(s,t)| dsdt}{|H_{jk}(Z_j(s,t))-H_{jk}(Z_j(s_1,t_1))|^q|H_{jk}(Z_j(s,t))-H_{jk}(Z_j(s_0,t_0))|^q}
\]
Hence,
\[
J_{jk}\leq M_{jk}\int_{D(0,R)}\frac{ dsdt}{|(Z_j(s,t)-Z_j(s_1,t_1)|^q|Z_j(s,t)-Z_j(s_0,t_0)|^q}\doteq M_{jk}Q_{jk},
\]
where
\[
M_{jk}=\frac{\max_{D(0,R)}\{|\det D\Phi_{jk}(s,t)|\}}{\min_{Z(D(0,R))}|H'_{jk}(\zeta)|^q}.
\]
By using the change of variables $\xi=s$ and $\eta=\displaystyle\frac{t|t|^{\sigma_j}}{\sigma_j+1}$ we have
\[
Q_{jk}=\frac{1}{(\sigma_j+1)^{\tau_j}}\int_{Z_j(D(0,R))}\frac{d\xi d\eta}{|\eta|^{\tau_j}|\zeta-w_1|^q|\zeta-w_0|^q},
\]
where $w_\ell=Z_j(s_\ell,t_\ell)$, $\ell=0,1$.
We can assume, without loss of generality, that $\eta_0>0$ and $w_1-w_0=|w_1-w_0|e^{i\varphi}$, with $0\leq\varphi\leq\pi$.
We have
\begin{align*}
Q_{jk} & \leq\frac{1}{(\sigma_j+1)^{\tau_j}}\int_{D(w_0;\textsf{d})}\frac{d\xi d\eta}{|\eta|^{\tau_j}|\zeta-w_1|^q|\zeta-w_0|^q}
\\ &
=\frac{1}{(\sigma_j+1)^{\tau_j}}\int_{D(0;\textsf{d})}\frac{d\xi d\eta}{|\eta_0+\eta|^{\tau_j}|\zeta-(w_1-w_0)|^q|\zeta|^q},
\end{align*}
where $\textsf{d}=\textrm{diam}(Z_j(D(0,R)))$.
After using the change of variables $\zeta=|w_1-w_0|\mu$, with $\mu=a+ib$ and $w_1-w_0=|w_1-w_0|e^{i\varphi}$, we obtain
\[
Q_{jk}\leq\frac{1}{(\sigma_j+1)^{\tau_j}}\int_{D\left(0;\frac{\textsf{d}}{|w_1-w_0|}\right)}\frac{|w_1-w_0|^{2-\tau-2q}}{|\frac{\eta_0}{|w_1-w_0|}+b|^\tau|\mu-e^{i\varphi}|^q|\mu|^q}da db.
\]
Finally, the polar coordinates $\mu=\zeta e^{i\varphi}$, $\zeta=re^{i\theta}$ and $\gamma=\eta_0/|w_1-w_0|$ and the use of Lemma \ref{lemma_holder} allow us to obtain the estimate
\begin{align*}
Q_{jk} & \leq\frac{|w_1-w_0|^{2-{\tau_j}-2q}}{(\sigma_j+1)^{\tau_j}}\int_0^{2\pi}
\int_0^{\frac{\textsf{d}}{|w_1-w_0|}}\frac{r dr d\theta}{|\gamma+r\sin(\theta+\varphi)|^{\tau_j} r^q|re^{i\theta}-1|^q}d\xi d\eta
\\ \\ &
\leq\frac{C(q,\tau_j)}{(\sigma_j+1)^{\tau_j}}|w_1-w_0|^{2-\tau_j-2q},
\end{align*}

To finish the proof, note that
\[
|w_1-w_0|=|H_{jk}^{-1}(Z(x_1,y_1))-H_{jk}^{-1}(Z(x_0,y_0))|
\geq A_{jk}|Z(x_1,y_1)-Z(x_0,y_0)|,
\]
for some constant $A_{jk}>0$, since $H_{jk}$ is a biholomorphism. Hence,
\[
Q_{jk} \leq \tilde{A}_{jk}|Z(x_1,y_1)-Z(x_0,y_0)|^{2-\tau_j-2q} \leq \tilde{A}_{jk}|Z(x_1,y_1)-Z(x_0,y_0)|^{2-\tau-2q}.
\]
Therefore, we can find a constant $M(q,\tau,\Omega)>0$ such that
\[
|T_Zf(x_1,t_1)-T_Zf(x_0,t_0)|\leq\|f\|_pM(q,\tau,\Omega)|Z(x_1,t_1)-Z(x_0,t_0)|^\frac{2-\tau-q}{q}
\]
This completes the proof.
\end{proof}

\section{A semilinear equation and a similarity principle}

In this section we consider the semilinear equation
\begin{equation}\label{dzff}
Lu= F(x,y,u),
\end{equation}
where $L$ is the hypocomplex vector field as in the previous sections defined in an open set
$\tilde{\Omega}\subset \mathbb{R}^2$.

Let $\Omega$ be a relatively compact open subset of $\tilde{\Omega}$ and let $\Psi\,\in\,L^{p}(\overline{\Omega};\overline{\mathbb{R}}_+)$, $p>2+\sigma$.
 We define the space $\mathcal{F}_\Psi^\alpha$ to be the set of functions
$F: \overline{\Omega}\times \mathbb{C} \to \mathbb{C}$ satisfying
\begin{itemize}
\item $F(.,\zeta) \in L^{p}(\overline{\Omega})$, for every $\zeta\in\mathbb{C}$;

\item $|F(x,t,\zeta_{1})-F(x,t,\zeta_{2})| \leq \Psi(x,t)|\zeta_{1}-\zeta_{2}|^{\alpha}$, $0<\alpha\leq 1$,
for all $\zeta_{1},\zeta_{2}\in \mathbb{C}$.
\end{itemize}
Throughout this section, we will assume that $q$ is the H\"{o}lder conjugate of $p$, that $\sigma $ is given by (\ref{sigma}),
$\tau =\displaystyle\frac{\sigma}{\sigma +1}$, and $\beta=\displaystyle \frac{2-q-\tau}{q}$.

\begin{theorem}\label{theorem_1_Lu=F}
Let $F\in\mathcal{F}_h^\alpha$. Then:
\begin{enumerate}
\item If $0< \alpha <1$, equation \eqref{dzff} has a solution $u\in C^{\beta}(\overline{\Omega})$.
\item If $\alpha =1$,  for every $(x,t) \in \Omega$, there exist an open subset $\mathcal{U}\subset\Omega$, with $(x,t) \in\mathcal{U}$,
such that equation \eqref{dzff} has a solution
$u \in C^\beta(\mathcal{U})$.
 If, moreover, the constant $M(p,\sigma,\Omega)$ appearing in Theorem \ref{continoperatz}
 satisfies  $M(p,\sigma,\Omega)||\Psi||_{p}<1$, then \eqref{dzff}
 has a solution in $C^\beta(\ov{\Omega})$.
\end{enumerate}
\end{theorem}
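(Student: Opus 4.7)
The plan is to rewrite \eqref{dzff} as a fixed-point equation for the nonlinear operator
\[
S : u \longmapsto T_Z\bigl(F(\cdot,\cdot,u(\cdot,\cdot))\bigr),
\]
and then apply Schauder's theorem when $0<\alpha<1$ and Banach's contraction principle when $\alpha=1$. Since Theorem \ref{tsolucao} gives $L(T_Zf)=f$ for $f\in L^1(\Omega)$, any fixed point $u=Su$ will automatically satisfy $Lu=F(\cdot,\cdot,u)$. I would first check that $S$ is well defined on $C(\overline{\Omega})$ and takes values in $C^\beta(\overline{\Omega})$: for $u\in C(\overline{\Omega})$, the pointwise estimate
\[
|F(x,y,u(x,y))|\leq |F(x,y,0)|+\Psi(x,y)\,\|u\|_\infty^\alpha
\]
shows $F(\cdot,\cdot,u)\in L^p(\Omega)$, so Theorems \ref{continoperatz} and \ref{holder_continuity} apply; in particular
\[
\|Su\|_\infty \leq M\bigl(\|F(\cdot,\cdot,0)\|_p +\|\Psi\|_p \|u\|_\infty^\alpha\bigr),
\]
where $M=M(p,\sigma,\Omega)$, together with a corresponding H\"older bound for $Su\in C^\beta(\overline{\Omega})$.

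For part (1), the sublinearity in $\|u\|_\infty$ provided by $\alpha<1$ lets me pick $R>0$ large enough that $M(\|F(\cdot,\cdot,0)\|_p+\|\Psi\|_p R^\alpha)\leq R$, so $S$ maps the closed ball $B_R=\{u\in C(\overline{\Omega}):\|u\|_\infty\leq R\}$ into itself. By Theorem \ref{holder_continuity} the image $S(B_R)$ is bounded in $C^\beta(\overline{\Omega})$, hence precompact in $C(\overline{\Omega})$ by Arzel\`a--Ascoli. Continuity of $S$ on $B_R$ follows from the H\"older dependence of $F$ in its third argument: if $u_n\to u$ uniformly then $|F(\cdot,\cdot,u_n)-F(\cdot,\cdot,u)|\leq \Psi\,\|u_n-u\|_\infty^\alpha$, which tends to $0$ in $L^p$, and Theorem \ref{continoperatz} then gives $Su_n\to Su$ uniformly. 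Schauder's fixed-point theorem then produces $u\in B_R$ with $Su=u$, and this $u$ automatically lies in $C^\beta(\overline{\Omega})$ by Theorem \ref{holder_continuity}.

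For part (2), the Lipschitz hypothesis ($\alpha=1$) combined with Theorem \ref{continoperatz} yields
\[
\|Su_1-Su_2\|_\infty \leq M\,\|\Psi\|_p\,\|u_1-u_2\|_\infty .
\]
If the global assumption $M(p,\sigma,\Omega)\|\Psi\|_p<1$ holds, $S$ is a strict contraction on $C(\overline{\Omega})$, Banach's theorem gives a unique fixed point in $C(\overline{\Omega})$, and Theorem \ref{holder_continuity} upgrades its regularity to $C^\beta(\overline{\Omega})$. For the local assertion, given $(x,y)\in\Omega$ I would choose a neighborhood $\mathcal{U}$ small enough that the constant $M(p,\sigma,\mathcal{U})$ associated with the restricted operator $T_{Z|\mathcal{U}}$ times $\|\Psi\|_{L^p(\mathcal{U})}$ is strictly less than $1$; the diameter-controlled factor $\mathsf{d}^{(2-\tau-q)/q}$ that emerges from the proof of Theorem \ref{continoperatz} together with the absolute continuity of the integral force both quantities to tend to $0$ as $\mathcal{U}$ shrinks to $(x,y)$. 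Then the contraction argument applied on $\mathcal{U}$ concludes.

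The main obstacle I anticipate is precisely this localization in the $\alpha=1$ case: one must verify that the constant produced in Theorem \ref{continoperatz} is really controlled by a positive power of $\mathrm{diam}(\mathcal{U})$ uniformly in $p$ and $\sigma$, especially when $\mathcal{U}$ meets a component of $\Sigma$ where $\sigma_j=\sigma$. The diameter estimate hidden inside the proof of Theorem \ref{continoperatz} is what ultimately makes this work, but it has to be invoked with care because the normalization of $L$ into $L_\sigma$ and the associated biholomorphisms $H_{jk}$ need to be handled on a shrinking neighborhood without losing uniform constants.
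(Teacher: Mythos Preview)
Your argument is correct. Part (1) matches the paper's proof almost verbatim: both set up the operator $u\mapsto T_Z(F(\cdot,\cdot,u))$, choose a large ball in $C(\overline{\Omega})$ that is mapped into itself thanks to $\alpha<1$, observe that the image is bounded in $C^\beta$ and hence precompact by Arzel\`a--Ascoli, check continuity, and apply Schauder.

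Part (2) differs: the paper does \emph{not} switch to the Banach contraction principle. Instead, once $M(p,\sigma,\mathcal{U})\|\Psi\|_p<1$ is arranged, the paper simply notes that the inequality $M(p,\sigma,\mathcal{U})(\|\Psi\|_p M+\|F(\cdot,0)\|_p)\le M$ is again solvable for large $M$ (the linear term now has coefficient $<1$), and reruns the Schauder argument from part (1) unchanged. Your contraction route is equally valid and has the advantage of yielding uniqueness of the fixed point in $C(\overline{\mathcal{U}})$ for free; the paper's route has the virtue of using a single tool for both cases. Your discussion of why $M(p,\sigma,\mathcal{U})\to 0$ as $\mathcal{U}$ shrinks (via the $\mathsf{d}^{2-\tau-q}$ factor from the proof of Theorem \ref{continoperatz}) is more explicit than what the paper writes; the paper just asserts the existence of such $\mathcal{U}$.
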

\begin{proof}
Let $C(\overline{\Omega})$ be the Banach space of continuous functions in $\overline{\Omega}$ with  norm
\[
||u||_{\infty}=\sup\{|u(x,t)| \ ; \ (x,t) \in \overline{\Omega}\}.
\]
For a fixed $M>0$, let $C_{M}(\overline{\Omega})$ be the closed subset of $C(\overline{\Omega})$ given by
\[
C_{M}(\overline{\Omega})= \{u\in C(\overline{\Omega}) \ ; \ ||u||_{\infty} \leq M\}.
\]

Suppose that $0<\alpha<1$. Then, for $M$ sufficiently large, we have that
\begin{equation}\label{vintebhhh}
M(p,\sigma,\Omega)\{||\Psi||_{p}M^{\alpha}+||F(\cdot,0)||_{p}\} \leq M,
\end{equation}
where $M(p,\sigma,\Omega)$ is given by Theorem \ref{continoperatz}.

Consider the operator $P: C_{M}(\overline{\Omega}) \to C_{M}(\overline{\Omega})$ defined by
\[
Pu(x,y)=T_Z(F(x,y,u(x,y)))\, ,
\]
where $T_Z$ is given by (\ref{integral_operator}). The operator $P$ is well-defined.
Indeed, for $F\in\mathcal{F}_\Psi^\alpha$ and $u\in C_{M}(\overline{\Omega})$ we have
\begin{equation}\label{equationnnnnntresdsds}
||F(\cdot,u)||_{p} \leq ||\Psi||_{p}M^{\alpha} + ||F(\cdot,0)||_{p}\, ,
\end{equation}
and it follows from \eqref{vintebhhh}, \eqref{equationnnnnntresdsds}, and Theorem \ref{continoperatz} that
for $u \in C_{M}(\overline{\Omega})$, we have
\begin{equation}\label{vintebhh}
|Pu(x,y)|\leq M(p,\sigma,\Omega)\{||\Psi||_{p}M^{\alpha}+||F(\cdot,0)||_{p}\} \leq M\, .
\end{equation}
Moreover, for $(x_{1},y_{1}), (x_{2},y_{2}) \in \overline{\Omega}$, we have
\begin{align*}
|Pu(x_{1},y_{1})-Pu(x_{2},y_{2})| & \leq \left|T_Z(F(\cdot,u))(x_{1},y_{1})-T_Z(F(\cdot,u))(x_{2},y_{2})\right|
\\  &
\leq C(p,\sigma,\Omega)||F(\cdot,u)||_{p}|(x_{1},y_{1})-(x_{2},y_{2})|^\beta\\
&
\leq C(p,\sigma,\Omega)\{||\Psi||_{p}M^{\alpha} + ||F(\cdot,0)||_{p}\}|(x_{1},y_{1})-(x_{2},y_{2})|^\beta\, ,
\end{align*}
where $C(p,\sigma,\Omega)$ is given by Theorem \ref{holder_continuity}.

For $C=C(p,\sigma,\Omega)\{||\Psi||_{p}M^{\alpha} + ||F(\cdot,0)||_{p}\}$, define $\Lambda_{M,C}$
as the set of all functions $v\in C_{M}(\overline{\Omega})$ satisfying
\[
|v(x_{1},y_{1})-v(x_{2},y_{2})| \leq C|(x_{1},y_{1})-(x_{2},y_{2})|^\beta , \ \ \forall (x_{1},y_{1}), (x_{2},y_{2}) \in \overline{\Omega}.
\]
$\Lambda_{M,C}$ is a nonempty convex subset of $C_{M}(\overline{\Omega})$. Also, as a consequence of
Ascoli-Arzel\'{a}'s Theorem, $\Lambda_{M,C}$ is compact. Moreover, $Pu \in \Lambda_{M,C}$, for all $u \in C_{M}(\overline{\Omega})$.

The operator $P$ is continuous. Indeed, for $u,v \in C_{M}(\overline{\Omega})$, we have
\begin{align*}
|Pu(x,y)-Pv(x,y)| & \leq \left|T_Z(F(\cdot,u)-F(\cdot,v))(x,y)\right| \\
 & \leq M(p,\sigma,\Omega)||F(\cdot,u)-F(\cdot,v)||_{p}
\\ &
\leq M(p,\sigma,\Omega)||\Psi||_{p}||u-v||_{\infty}^{\alpha};
\end{align*}
hence, the restriction $P: \Lambda_{M,C} \to \Lambda_{M,C}$ is continuous.
Therefore, Shauder fixed point Theorem, implies that there exists $u \in \Lambda_{M,C}$ such that $Pu =u$. The fixed point
\[
u= T(F(\cdot,u))\in C^{\beta}(\overline{\Omega})
\]
satisfies $Lu(x,y)=F(x,y,u(x,y))$.\vspace{5pt}

Next, suppose that $\alpha=1$. In this case, for every $(x,y)\in\Omega$ we can find an open
$\mathcal{U}\subset\Omega$, with $(x,y)\in\mathcal{U}$, such that
\[
M(p,\sigma,\mathcal{U})||\Psi||_{p} < 1.
\]
If $M>0$ is taken sufficiently large, \eqref{vintebhhh} holds when $\Omega$ is replaced by $\mathcal{U}$ .
The same argument as the one used when $\alpha <1$, shows the existence of $u\in C^{\beta}({\mathcal{U}})$ satisfying \eqref{dzff} in $\mathcal U$.

\end{proof}

As a direct consequence of the Theorem \ref{theorem_1_Lu=F}, we have the following Corollary.

\begin{corollary}
Let $a,b, f\in L^p_{loc}(\mathbb{R}^2)$, with $p>2+\sigma$.
Every $(x,y)\in\mathbb{R}^2$ has an open neighborhood $\mathcal{U}\subset\mathbb{R}^2$ such that
equation
\[ Lu=au+b\overline{u}+f\]
has a solution  $u\in C^{\beta}(\overline{\mathcal{U}})$.
\end{corollary}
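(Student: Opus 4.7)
The plan is to deduce this directly from the second part of Theorem \ref{theorem_1_Lu=F} by packaging the right-hand side into a function $F$ of the required type. Specifically, I would set
\[
F(x,y,\zeta) = a(x,y)\zeta + b(x,y)\overline{\zeta} + f(x,y),
\]
and take $\Psi(x,y) = |a(x,y)| + |b(x,y)|$.

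Next I would fix the point $(x_0,y_0)\in\mathbb{R}^2$ and choose any bounded open $\tilde{\Omega}\subset\mathbb{R}^2$ containing it, then a relatively compact open $\Omega$ with $(x_0,y_0)\in\Omega\subset\subset\tilde{\Omega}$. Since $a,b,f\in L^p_{loc}(\mathbb{R}^2)$, the restrictions to $\overline{\Omega}$ lie in $L^p(\overline{\Omega})$; in particular $\Psi\in L^p(\overline{\Omega};\overline{\mathbb{R}}_+)$ and $F(\cdot,\zeta)\in L^p(\overline{\Omega})$ for every $\zeta\in\mathbb{C}$. The Lipschitz estimate
\[
|F(x,y,\zeta_1)-F(x,y,\zeta_2)| \leq (|a(x,y)|+|b(x,y)|)|\zeta_1-\zeta_2| = \Psi(x,y)|\zeta_1-\zeta_2|
\]
shows that $F\in\mathcal{F}_\Psi^{\alpha}$ with $\alpha=1$.

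Now I would invoke the $\alpha=1$ part of Theorem \ref{theorem_1_Lu=F}: for the point $(x_0,y_0)\in\Omega$ there exists an open neighborhood $\mathcal{U}\subset\Omega$ of $(x_0,y_0)$ on which the semilinear equation $Lu=F(x,y,u)$ admits a solution $u\in C^{\beta}(\overline{\mathcal{U}})$. Since $Lu=F(x,y,u)=au+b\overline{u}+f$, this $u$ is the sought-after solution.

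The only point requiring care — and the closest thing to an obstacle — is that the local version of the $\alpha=1$ case in Theorem \ref{theorem_1_Lu=F} requires shrinking $\mathcal{U}$ so that $M(p,\sigma,\mathcal{U})\|\Psi\|_p<1$; this is already handled inside the proof of that theorem (the constant $M(p,\sigma,\mathcal{U})$ scales with a positive power of the diameter of $\mathcal{U}$ via the estimate $\textsf{d}^{2-\tau-q}$ from Theorem \ref{continoperatz}, so it tends to zero as $\mathcal{U}$ shrinks), and the restriction of $\Psi$ to a smaller set only decreases $\|\Psi\|_p$. Thus no additional hypotheses on $a,b$ are needed for the local statement, and the corollary follows.
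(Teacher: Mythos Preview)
Your proposal is correct and is precisely the approach the paper intends: the paper states the corollary as ``a direct consequence of Theorem \ref{theorem_1_Lu=F}'' with no further proof, and your verification that $F(x,y,\zeta)=a\zeta+b\overline{\zeta}+f$ lies in $\mathcal{F}_\Psi^1$ with $\Psi=|a|+|b|$, followed by an appeal to the $\alpha=1$ case of that theorem, is exactly how the implication is meant to be filled in. Your additional remark on why $M(p,\sigma,\mathcal{U})$ can be made small by shrinking $\mathcal{U}$ is a helpful clarification of a step the paper leaves implicit.
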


Now, consider $F$ given by
\begin{equation}\label{dzfff}
F(x,y,u) =g(x,y)H(x,y,u)+f(x,y)
\end{equation}
where $f,g \in L^{p}({\Omega})$, $p>2+\sigma$, and $H:\overline{\Omega}\times\mathbb{C}\to \mathbb{C}$ is continuous and bounded,
with $||H||_\infty <K$ for some positive constant $K$.

\begin{theorem}\label{abfcont}
Let $F$ be given by \eqref{dzfff}. Then, equation $Lu=F(x,y,u)$ has a solution $u\in C^{\beta}(\overline{\Omega})$.
\end{theorem}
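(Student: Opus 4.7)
The plan is to apply Schauder's fixed point theorem to the operator $Pu(x,y) = T_Z(F(\cdot,\cdot,u(\cdot,\cdot)))(x,y)$ on a suitable compact convex subset of $C(\overline{\Omega})$. The key difference from Theorem \ref{theorem_1_Lu=F} is that $F$ is only continuous (not H\"older) in its third variable, so we cannot invoke that theorem directly; the compensating feature here is that $F(x,y,u)$ is uniformly bounded in $u$ thanks to $\|H\|_\infty<K$, which gives a uniform $L^p$ estimate on $F(\cdot,\cdot,u)$ independent of $u$.

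First I would estimate $\|F(\cdot,\cdot,u)\|_p\le K\|g\|_p+\|f\|_p$ for all $u$, and then apply Theorem \ref{continoperatz} and Theorem \ref{holder_continuity} to obtain
\[
\|Pu\|_\infty\le M(p,\sigma,\Omega)\bigl(K\|g\|_p+\|f\|_p\bigr)=:M,
\]
together with the H\"older estimate $|Pu(x_1,y_1)-Pu(x_2,y_2)|\le C|(x_1,y_1)-(x_2,y_2)|^\beta$ for some constant $C$ independent of $u$. Thus $P$ sends all of $C(\overline{\Omega})$ into the set $\Lambda_{M,C}$ defined exactly as in the proof of Theorem \ref{theorem_1_Lu=F}; this set is convex, and compact in $C(\overline{\Omega})$ by Arzel\`a-Ascoli.

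The step that requires the most care is continuity of $P$ on $\Lambda_{M,C}$. Suppose $u_n\to u$ uniformly in $\Lambda_{M,C}$. Since $H$ is continuous on the compact set $\overline{\Omega}\times\overline{D(0,M)}$, it is uniformly continuous there, so $H(x,y,u_n(x,y))\to H(x,y,u(x,y))$ uniformly on $\overline{\Omega}$. Therefore
\[
\|F(\cdot,\cdot,u_n)-F(\cdot,\cdot,u)\|_p=\|g\bigl(H(\cdot,\cdot,u_n)-H(\cdot,\cdot,u)\bigr)\|_p\le\|g\|_p\,\|H(\cdot,\cdot,u_n)-H(\cdot,\cdot,u)\|_\infty\longrightarrow 0,
\]
and Theorem \ref{continoperatz} yields $\|Pu_n-Pu\|_\infty\le M(p,\sigma,\Omega)\|F(\cdot,\cdot,u_n)-F(\cdot,\cdot,u)\|_p\to 0$.

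Finally, Schauder's fixed point theorem applied to the continuous map $P:\Lambda_{M,C}\to\Lambda_{M,C}$ produces a fixed point $u\in\Lambda_{M,C}\subset C^\beta(\overline{\Omega})$, which by Theorem \ref{tsolucao} satisfies $Lu=F(x,y,u)$ in $\Omega$. The main obstacle is precisely the continuity argument: the lack of H\"older dependence of $H$ on $u$ is what forces us to replace the contraction/Banach-type argument (available in Theorem \ref{theorem_1_Lu=F}) by a compactness-based Schauder argument, and the boundedness of $H$ is exactly what makes this trade feasible.
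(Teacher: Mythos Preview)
Your proposal is correct and follows essentially the same approach as the paper: define $Pu=T_Z(F(\cdot,\cdot,u))$, use the uniform bound $\|F(\cdot,\cdot,u)\|_p\le K\|g\|_p+\|f\|_p$ together with Theorems \ref{continoperatz} and \ref{holder_continuity} to show $P$ maps into a compact convex set $\Lambda_{M,C}\subset C(\overline{\Omega})$, establish continuity of $P$ via uniform continuity of $H$ on $\overline{\Omega}\times\overline{D(0,M)}$, and conclude with Schauder's fixed point theorem. The only cosmetic difference is that you phrase the continuity argument sequentially while the paper uses an $\epsilon$--$\delta$ formulation.
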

\begin{proof}
Consider the operator $P: C(\overline{\Omega}) \to C(\overline{\Omega})$ defined by
\[
Pu(x,y)=T_Z(gH(\cdot,u)+f)(x,y).
\]
Since $H(\cdot,u) \in L^{\infty}({\Omega})$ we have $gH(\cdot,u)+f \in L^{p}({\Omega})$. It follows
from Theorem \ref{continoperatz}, that for every $(x,y)\in\ov{\Omega}$ we have
\begin{align*}
|Pu(x,y)|& \leq M(p,\sigma,\Omega)||gH(\cdot,u)+f||_{p}\\
& \leq M(p,\sigma,\Omega)\{||g||_{p}||H(\cdot,u)||_{\infty}+||f||_{p}\}
\\ &
\leq M(p,\sigma,\Omega)\{||g||_{p}K+||f||_{p}\}\doteq M\, .
\end{align*}
 Moreover, it follows from  Theorem \ref{holder_continuity}, that
for $(x_{1},y_{1}), (x_{2},y_{2}) \in \overline{\Omega}$ and $u \in C(\overline{\Omega})$ there is $C(p,\sigma,\Omega)>0$ such that
\begin{align*}
|Pu(x_{1},y_{1})-Pu(x_{2},y_{2})| & \leq
\left|T_Z(gH(\cdot,u)+f)(x_{1},y_{1})-T_Z(gH(\cdot,u) +f)(x_{2},y_{2})\right| \\
& \leq C(p,\sigma,\Omega)||gH(\cdot,u)+f||_{p}|(x_{1},y_{1})-(x_{2},y_{2})|^{\beta}\\
& \leq C(p,\sigma,\Omega)\{||g||_{p}K+||f||_{p}\}|(x_{1},y_{1})-(x_{2},y_{2})|^{\beta}.
\end{align*}

Let $C=C(p,\sigma,\Omega)\{||g||_{p}K +||f||_{p}\}$ and $\Lambda_{M,C}$ be the set of all functions $v\in C(\overline{\Omega})$,
with $||v||_\infty\leq M$, and satisfying
\[
|v(x_{1},y_{1})-v(x_{2},y_{2})| \leq C|(x_{1},y_{1})-(x_{2},y_{2})|^{\beta}, \ \ \forall (x_{1},y_{1}), (x_{2},y_{2}) \in \overline{\Omega}.
\]
As in the proof the previous Theorem,  $\Lambda_{M,C}$ is a nonempty convex compact subset of $C(\overline{\Omega})$.
Moreover, $Pu \in \Lambda_{M,C}$, for all $u \in C(\overline{\Omega})$.

The operator $P:\Lambda_{M,C} \to \Lambda_{M,C}$ is continuous. Indeed, since $H$ is uniformly continuous on the compact set
 $U=\overline{\Omega}\times \{\zeta\in\mathbb{C} ; |\zeta| \leq M\}$, then given
$\epsilon>0$  there exists $\delta>0$ such that
\[
|H(x,y,\zeta_{1})-H(x,y,\zeta_{2})| < \displaystyle\frac{\epsilon}{M(p,\sigma,\Omega)\{||g||_{p}+1\}},
\]
for all $(x,y) \in \overline{\Omega}$ and $|\zeta_{1}-\zeta_{2}|<\delta$.
Hence, for $u,v\in \Lambda_{M,C}$ with $||u-v||_{\infty}<\delta$, we have
\begin{align*}
|Pu(x,y)-Pv(x,y)| &
\leq \left|T_Z(g\{H(\cdot,u)-H(\cdot,v)(x,y)\})\right|\\
& \leq M(p,\sigma,\Omega)||g\{H(\cdot,u)-H(\cdot,v)\}||_{p}\\
&
\leq M(p,\sigma,\Omega)||g||_{p}||H(\cdot,u)-H(\cdot,v)||_{\infty}\\
&\leq M(p,\sigma,\Omega)||g||_{p}\displaystyle\frac{\epsilon}{M(p,\sigma,\Omega)\{||g||_{p}+1\}} < \epsilon.
\end{align*}
Therefore, by Shauder Fixed Point Theorem, $P$ has a fixed point in $\Lambda_{M,C}$ that satisfies the conclusion
of the Theorem.
\end{proof}

%%%%%%%%%%%% similarity principle %%%%%%%%%%%
The classical similarity principle for generalized analytic functions
was invesitigated in {\cite{Ber-Hou-San}} and in {\cite{Mez-JDE99}}
for solutions of complex vector fields.
As a consequence of Theorem \ref{holder_continuity} and Theorem \ref{abfcont} we give here a strong version
of the similarity principle for the operator $L$:

\begin{theorem}
Let $a,b \in L^{p}(\Omega)$, $p>2+\sigma$, $\sigma>0$. Then for every $u \in L^\infty({\Omega})$ solution of equation
\begin{equation}\label{equationluaububarrasimi}
Lu=au+b\overline{u}
\end{equation}
there exists a holomorphic function $h$ defined in $Z(\Omega)$ and a function $s \in C^{\beta}({\ov{\Omega}})$ such that
\begin{equation}\label{escritauigualhzessimi}
u(x,y)=h(Z(x,y))e^{s(x,y)}, \ \  \forall (x,y)\in \Omega.
\end{equation}
Conversely, for every holomorphic function $h$ in $Z(\Omega)$ there is $s \in C^{\beta}({\overline{\Omega}})$ such that the function
$u$ given by \eqref{escritauigualhzessimi} solves \eqref{equationluaububarrasimi}.
\end{theorem}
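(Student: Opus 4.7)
My plan is to run the classical Bers--Vekua similarity argument with the Cauchy--Pompeiu operator replaced by $T_Z$. For the forward direction, suppose $u \in L^\infty(\Omega)$ solves $Lu = au + b\overline{u}$. I would introduce the pointwise ratio
\[
F(x,y) = \begin{cases} a(x,y) + b(x,y)\,\overline{u(x,y)}/u(x,y), & u(x,y) \ne 0, \\ a(x,y), & u(x,y) = 0, \end{cases}
\]
which satisfies $|F| \le |a|+|b|$, hence $F \in L^p(\Omega)$, and $Fu = au + b\overline{u}$ pointwise (both on $\{u \ne 0\}$ by construction and on $\{u=0\}$ trivially). Setting $s = T_Z F$, Theorem \ref{tsolucao} gives $Ls = F$ in $\Omega$ and Theorem \ref{holder_continuity} gives $s \in C^{\beta}(\overline{\Omega})$.

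Next, with $v = u e^{-s}$ and using the Leibniz rule distributionally, one computes
\[
Lv = e^{-s}\,Lu - u e^{-s}\,Ls = e^{-s}(au + b\overline{u} - uF) = 0.
\]
Since $v \in L^\infty(\Omega)$ and $L$ is hypocomplex, the Remark following Proposition \ref{first_integral} provides a holomorphic function $h$ on $Z(\Omega)$ with $v = h \circ Z$, so $u = h(Z) e^{s}$, as required.

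For the converse, given holomorphic $h$ on $Z(\Omega)$, I would seek $s \in C^{\beta}(\overline{\Omega})$ such that $u := h(Z) e^{s}$ solves \eqref{equationluaububarrasimi}. Since $L(h\circ Z) = 0$ gives $Lu = u\,Ls$, the desired identity reduces (where $h \circ Z \ne 0$) to
\[
Ls \;=\; a + b\,\frac{\overline{h(Z)}}{h(Z)}\, e^{\overline{s} - s}.
\]
Defining $g(x,y) = b(x,y)\,\overline{h(Z(x,y))}/h(Z(x,y))$ on the complement of the discrete zero set of $h\circ Z$ and $g=0$ elsewhere, we have $g \in L^p(\Omega)$ with $|g| \le |b|$; and $H(x,y,\zeta) = e^{\overline{\zeta}-\zeta}$ is continuous and bounded by $1$. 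Hence $F(x,y,\zeta) := a(x,y) + g(x,y) H(x,y,\zeta)$ has the form \eqref{dzfff}, so Theorem \ref{abfcont} produces $s \in C^{\beta}(\overline{\Omega})$ with $Ls = a + g\,e^{\overline{s}-s}$. A direct substitution then verifies that $u = h(Z) e^{s}$ solves \eqref{equationluaububarrasimi} on all of $\Omega$: away from zeros of $h\circ Z$ the reduction is reversible, and at zeros $u$ and $\overline{u}$ both vanish.

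The main delicate point is justifying the distributional identity $L(u e^{-s}) = e^{-s}(Lu - u\,Ls)$ for $u \in L^\infty(\Omega)$ and $s \in C^{\beta}(\overline{\Omega})$, and in invoking hypocomplexity to pass from $Lv=0$ with $v \in L^\infty$ to $v = h \circ Z$. Both are handled by the $C^\beta$-regularity of $e^{\pm s}$, which makes multiplication legitimate on distributions, together with the stability of the hypocomplex conclusion in the preliminaries under $L^\infty$ solutions; everything else amounts to bookkeeping with the integral operator estimates already established.
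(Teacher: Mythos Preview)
Your proof is correct and follows essentially the same Bers--Vekua route as the paper: define the bounded multiplier $\overline{u}/u$ (the paper's $\phi$), solve $Ls = a + b\phi$ via $T_Z$ and Theorem~\ref{holder_continuity}, factor $ue^{-s}$ through hypocomplexity, and for the converse invoke Theorem~\ref{abfcont} on $Ls = a + b\tilde\varphi\,e^{\overline s - s}$. The only cosmetic differences are a sign convention (the paper takes $Ls = -(a+b\phi)$ and $v = ue^{s}$) and that the paper first invokes the classical local similarity principle on $\Omega\setminus\Sigma$ to note $u$ has isolated zeros there---a step your formulation does not actually need.
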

\begin{proof}
The proof is an adaptation of that found in \cite{Mez-CM05}-Theorem 4.1. In order to keep this work as self-contained as
possible we will repeat the arguments here.

Suppose that $u \in L^\infty({\Omega})$ and that $u$ is not identically zero.
Since $L$ is smooth and elliptic in $\Omega \setminus \Sigma$ we know that $L$ is locally equivalent to a multiple of Cauchy-Riemann operator
$\partial/\partial\overline{z}$ in $\Omega \setminus\Sigma$ (see, for instance, \cite{BeCH}).
The classical similarity principle (see \cite{Bers} and \cite{Vekua}) applies and the function $u$ has the representation \eqref{escritauigualhzessimi} in the neighborhood
of each point $(x,y)\notin\Sigma$. Hence, $u$ has isolated zeros in $\Omega \setminus \Sigma$. Define the function $\phi$ in $\Omega$ by $\phi=\overline{u}/u$
at the points where $u$ is not zero and by $\phi=0$ at the points where $u=0$ and on $\Sigma$.
 Note that $\phi \in L^{\infty}({\Omega})$. It follows that $a+ b\phi \in L^{p}({\Omega})$. Consider the equation
\begin{equation}\label{ssimiloooo}
Ls=-(a+b\phi).
\end{equation}
By Theorem \ref{holder_continuity}  this equation  has a solution $s \in C^{\beta}(\overline{\Omega})$.
Define $v=ue^{s}$. A simple calculation shows that $Lv=0$. Then, $v$ can be factored as $v=h\circ Z$, with $h$ holomorphic on $Z(\Omega)$.
This proves the first part of the Theorem.

Next,
let $h$ be a holomorphic function in $Z(\Omega)$.
Define the function $\varphi$ in $Z(\Omega)$ by $\varphi=\overline{h}/h$ at the points where $h$ is not zero and by $\varphi=0$ at the points where $h=0$.
Then $\tilde{\varphi}=\varphi \circ Z \in L^{\infty}(\Omega)$.
Consequently, $b\tilde{\varphi} \in L^{p}({\Omega})$, $p>2+\sigma$.
Hence, by Theorem \ref{abfcont}, equation
\begin{center}
$Ls = a + b\tilde{\varphi} e^{\overline{s}-s}$
\end{center}
has a solution $s\in C^{\beta}({\ov{\Omega}})$. It follows at once that $u$ given by
\[
u(x,y)=h(Z(x,y)) e^{s(x,y)}, \quad (x,y)\in \Omega,
\]
solves \eqref{equationluaububarrasimi} in $\Omega$.
\end{proof}

\bibliographystyle{amsplain}

\end{document}